\newtheorem{theorem}{Theorem}
\theoremstyle{plain}
\newtheorem{lemma}{Lemma}
\newtheorem{proposition}{Proposition}
\newtheorem{corollary}{Corollary}
\newtheorem{question}{Question}
\theoremstyle{definition}
\newtheorem{definition}{Definition}
\theoremstyle{remark}
\newtheorem{remark}{Remark}
\begin{document}

\title{Polynomial Time Computable Triangular Arrays for Almost Sure Convergence}

\date{8 March, 2016}

\author{Vladimir \mbox{Dobri\ensuremath{\acute{\text{c}}^{\dagger{ }}}} }
\address{Department of Mathematics, Lehigh University, 14 East Packer Avenue, 
Bethlehem, PA 18015}
\email{vd00@lehigh.edu}

\author{Marina Skyers}
\address{Department of Mathematics, Lehigh University, 14 East Packer Avenue, 
Bethlehem, PA 18015}
\email{marinaskyers@gmail.com}

\author{Lee J. Stanley}
\address{Department of Mathematics, Lehigh University, 14 East Packer Avenue, 
Bethlehem, PA 18015}
\email[Corresponding author]{ljs4@lehigh.edu}

\thanks{Skyers and Stanley dedicate this paper to the memory of our dear departed friend and co-author, 
Vladimir \mbox{Dobri\ensuremath{\acute{\text{c}}}}.
The important contribution of S. Buss
will be explicitly acknowledged at various points in the body of the paper.  
We would also like to thank P.
Clote and A. Nerode for helpful discussions of complexity issues, and to
thank Nerode for pushing us to improve our upper complexity bounds and
suggesting the connection between tameness and continuity.  We are indebted
to our Lehigh colleagues, Vince Coll, Daniel Conus, Rob Neel and Joe Yukich for many helpful
comments and suggestions.}

\subjclass[2010]{Primary 60G50, 60F15 ; Secondary 68Q15, 68Q17, 68Q25}

\keywords{  Central Limit Theorem, Almost Sure Convergence, Strong Triangular Array Representations, Admissible
Permutations, Polynomial Time Computability, Sums of Binomial Coefficients}

\begin{abstract}
For $x \in (0,1)$, write
$x = \sum_{i=1}^\infty \varepsilon_i(x)2^{-i}$, with each $\varepsilon_i(x) \in \{ 0,1\}$ and
$\varepsilon_i(x) = 0$ for infinitely many $i$.  Let $R_i(x) := 
(-1)^{1 + \varepsilon_i(x)}$ and $\left\{ S_n\right\}$ be the random walk on 
$\mathbb{Z}$ defined on $(0,1):
S_n =\sum_{i=1}^nR_i$.   
By the Central Limit Theorem, the sequence $\left\{S_n/\sqrt{n}\right\}$
converges weakly to the standard normal distribution on $(0,1)$.
It is well known that there are $S^*_n$, equal
in distribution to $S_n$, for which Skorokhod showed that $\left\{S^*_n/\sqrt{n}\right\}$ converges
almost surely to the standard normal on $(0,1)$.

We introduce a general method for constructing from $\left\{R_i\right\}$ triangular array representations
$\left ( R^*_{n,i}\vert 1 \leq i \leq n,\ n \in \mathbb{Z^+}\right )$ of $\left\{ S^*_n\right\}$, where
each $R^*_{n,i}$ is a mean 0, variance 1 Rademacher random variable depending only on the first $n$ bits
of the binary expansion of $x \in (0,1)$.  These representations are {\it strong} in that for each
$n,\ S^*_n$ is equal to the sum of the i.i.d family, $\left ( R^*_{n,i}\vert 1 \leq i \leq n\right )$, pointwise,
as a function on $(0,1)$, not just in distribution.  Our construction method gives a bijection between
the set of all representations with these properties and the set of sequences, $\left\{ \pi_n\right\}$, 
where each $\pi_n$  is a permutation of $\{ 0, \ldots 2^n - 1\}$ with the property that  we call {\it admissibility}.  

We show that the complexity of any sequence of admissible permutations is bounded below by
the complexity of $2^n$, the exponential  function on natural numbers with base 2.  We explicitly construct
three such sequences which are polynomial time computable and whose complexity is
bounded above by the complexity of the function we denote by SBC (for sum of binomial coefficients), 
closely related to the binomial distributions with parameter $p = 1/2$. 
We also initiate the study of some additional fine properties of admissible permutations.
\end{abstract}

\maketitle

\makeatletter

\section{Introduction}
\label{sec:1}
\subsection{Motivation}
\label{subsec:1.1}
Let $\left\{ R_i\vert i \in \mathbb{Z}^+\right\}$ be an i.i.d sequence of random variables, each with mean 0 and variance 1, and for each $n > 0$, let $\displaystyle{S_n = \sum_{i=1}^n R_i}$ be the $n^{\text{th}}$ partial sum; thus, by the classical
Central Limit Theorem, the sequence
$\left\{ S_n/\sqrt{n}\right\}$ converges weakly to the standard normal.  It is well known, {[}\ref{Skorokhod}{]}, that that there is another sequence, $\left\{S_n^*\right\}$, defined on $(0,1)$ (equipped with Lebesgue measure), with each $S^*_n$
equivalent in distribution to $S_n$, such that $\left\{ S^*_n/\sqrt{n}\right\}$ converges almost surely to $Z$, the standard normal.  However, is there a general method for obtaining, for each $n$, an i.i.d. family $\left ( R^*_{n,i}\vert 1 \leq i \leq n\right )$, with each $R^*_{n,i}$ equal in distribution to $R_i$, and whose sum is equal (literally, pointwise, not just in distribution) to $S^*_n$?  Once such families are obtained, they provide
a {\it strong }triangular array representation of $\left\{S^*_n\right\}$.  This is the natural counterpart,
for almost sure convergence, of the standard notion of triangular array representation.

At this level of generality, the problem appears to be quite difficult.  This paper begins the investigation of the problem in the simple but important setting where the sequence $\left\{S_n\right\}$ is the (equiprobable) random walk on $\mathbb{Z}$ with domain $(0,1)$.  For $0 < x < 1$,  write
$x = \sum_{i=1}^\infty \varepsilon_i(x)2^{-i}$, with each $\varepsilon_i(x) \in \{ 0,1\}$ and
$\varepsilon_i(x) = 0$ for infinitely many $i$.  For $1 \leq i \leq n$ and $0 < x < 1$, we take 
$\displaystyle{R_i(x) = (-1)^{1+\varepsilon_i(x)}}$; this gives the simplest expression for the $S_n$. Even in this simple setting, 
the behavior of the $S_n/\sqrt{n}$ is extremely chaotic and the disorder increases with $n$.  One manifestion of this chaos is an easy consequence of the
LIL, {[}\ref{Khintchine}{]}, {[}\ref{Kolmogoroff}{]}  for example: for almost all $x,\ \overline{\lim}_{n \to \infty}S_n(x)/\sqrt{n} = \infty$ and $\underline{\lim}_{n \to \infty}S_n(x)/\sqrt{n} = -\infty$.  On the other hand, each $S^*_n$ is a non-decreasing step
function, mirroring the almost sure convergence of the sequence of
normalizations.  

The graphs of $S_n$ and $S^*_n$ for $n=6,7$ are
included  below. $S_n$ is shown in magenta, while
$S^*_n$ is shown in green.  

\begin{figure}[ht]
\begin{minipage}[b]{0.45\linewidth}
\centering
\includegraphics[width=\textwidth]{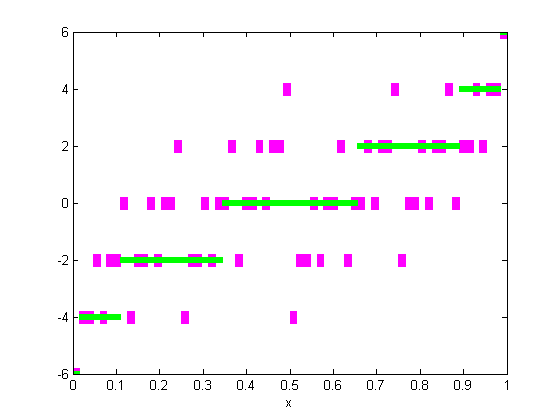}
\caption{$S_6,S^*_6$ }
\label{fig:figure1}
\end{minipage}
\hspace{0.5cm}
\begin{minipage}[b]{0.45\linewidth}
\centering
\includegraphics[width=\textwidth]{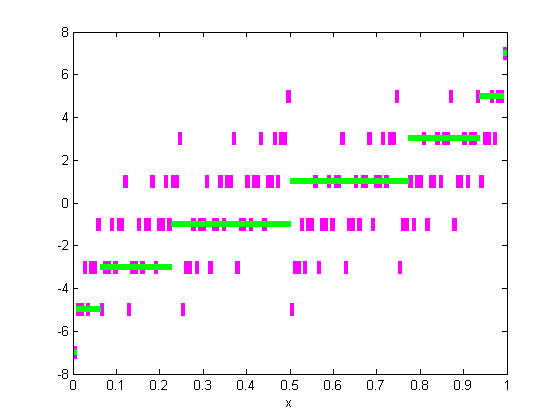}
\caption{$S_7,S^*_7$}
\label{fig:figure2}
\end{minipage}
\end{figure}

Since $S^*_n$ is equal in distribution to $S_n$, it follows that  
$S^*_n$ does have triangular array representations. But what
about {\it strong }ones, in the sense of the first paragraph?
Specializing to this setting, the problem laid out in that paragraph
can be restated as follows.
\begin{question}  How do we obtain {\it strong }triangular array representations,
$\left (R^*_{n,i}\vert n \in \mathbb{Z}^+,\ 1 \leq i \leq n\right )$,
of $\left\{S^*_n\right\}$?  What do the $R^*_{n,i}$ look like, as functions of $x$? 
\end{question}
The LIL has implications, here, as well.  For example, for each $n$,
at least one of the $R^*_{n,i}$ must depend on more than one bit.  Also, 
the $R^*_{n,i}$ must depend on $n$, not just on $i$.

We will give an explicit procedure for obtaining the sought-after $R^*_{n,i}$,  and they will have
an additional property; they are {\it trim } in that, for fixed $n$, each $R^*_{n,i}(x)$ will depend only on $\left (\varepsilon_1(x), \ldots , \varepsilon_n(x)\right )$.  This leads naturally to the next question.
\begin{question}
What are the {\it trim, strong }triangular array
representations of  $\left \{ S^*_n\right \}$?
\end{question}
Our procedure starts from any sequence $\left\{ \pi_n\right\}$,
where each $\pi_n$ is a permutation of $\left\{ 0, \ldots , 2^n-1\right\}$ with the additional property of being {\it admissible}.  Any permutation, $\pi$, of  $\left\{ 0, \ldots , 2^n-1\right\}$, can be viewed as permuting the level $n$ dyadic intervals (by permuting their indices).  This provides a rearrangement of $(0,1)$.  Such a permutation is {\it admissible } iff $S^*_n$ results when
the corresponding rearrangement is followed by applying $S_n$ (as a function).  This is made precise in Equation (1) of subsection
(1.2).

For each $n$, the passage from $\pi_n$ to $\left (R^*_{n,i}\vert 1 \leq i \leq n\right )$ is explicit, canonical and one-to-one and is given by Equation (2) of the proof of Theorem 1 in (2.2).  Further,
as $\pi_n$ varies over admissible permutations, our procedure generates {\it all }possible suitable  families $\left (R^*_{n,i}\vert  1 \leq i \leq n\right )$, where
each $R^*_{n,i}$ is trim.  The passage from $\left (R^*_{n,i}\vert 1 \leq i \leq n\right )$
to $\pi_n$ is given by Equation (3), also in (2.2).   The obvious extension to a canonical bjection between
sequences of admissible permutations and trim strong triangular array representations of $\left\{S^*_n\right\}$ is
given by Corollary 1.  Thus Theorem 1 and Corollary 1 answer Questions 1 and 2.  
 
Since almost sure convergence is such a restrictive condition, it is natural to ask how hard it is to produce the trim
strong triangular array representations of $\left\{S^*_n\right\}$ and what additional special properties they must have.
As with the existence of trim  strong triangular array representations, prior to this paper, very little was known; 
to our knowledge, the questions in the previous sentence have not been considered until now.  Once we know how to associate sequences of admissible permutations to trim strong triangular array representations, it becomes natural to pursue these questions in terms of the complexity of the associated sequences.  The second half of the paper carries out  such a complexity analysis, motivated by the following
questions.
\begin{question}
Are there trim, strong triangular
representations of $\left \{ S^*_n\right \}$
of low complexity? 
\end{question}
\begin{question}
Are there trim, strong triangular
array representations of $\left \{ S^*_n\right \}$
which differ as little as possible from the above representation
of $\left \{ S_n\right \}$ and which are also of low complexity?    
\end{question}
We explicitly construct three trim, strong triangular array representations of quite low complexity, as measured by the complexity
of their classifying sequences of admissible permutations, $\left\{ F_n\right\},\ \left\{ G_n\right\}$ and
$\left\{ H_n\right\}$.  Our basic complexity estimate is that they are all polynomial
time computable (we make this precise in subsection (1.2)).  This is the content of Theorem 2, in subsection (3.4), for 
$\left\{ F_n\right\}$ and of Theorem 3, in subsection (4.2), for $\left\{ G_n\right\}$ and $\left\{ H_n\right\}$.  These results therefore answer Question 3 affirmatively.  Since $\left\{ G_n\right\}$ and $\left\{ H_n\right\}$ are constructed so as
to differ as little as possible from the representation of $\left\{S_n\right\}$ by $\left\{R_i\right\}$, Theorem 3 answers Question
4 affirmatively.
 
The proofs of Theorems 2 and 3 yield the somewhat sharper result that each of  $\left\{ F_n\right\},\ \left\{ G_n\right\}$ and
$\left\{ H_n\right\}$ is very simply computed in terms of the function we denote by SBC (for Sum of Binomial Coefficients) introduced in Definition 1 in subsection (3.1).  This function is very naturally associated with the binomial distributions with parameter $p = 1/2$.    The computation of each of the three sequences in terms of SBC is a counterpart to the result that $2^n$ has a very simple expression in terms of {\it any} sequence, $\left\{\pi_n\right\}$, of admissible permutations.  Thus, the complexity of each of  $\left\{ F_n\right\},\ \left\{ G_n\right\}$ and $\left\{ H_n\right\}$ is bracketed in the fairly narrow range between that of $2^n$ and that of SBC.  We shed further light on the relationship between SBC and $\left\{ F_n\right\}$ in Corollary 4 of subsection (3.5); discussion is deferred until (1.3.5) and Section 3. 

While their properties are indeed rather special, the trim strong triangular array representations 
corresponding to  $\left\{ F_n\right\},\ \left\{ G_n\right\}$ and
$\left\{ H_n\right\}$ are, perhaps surprisingly, not so diffiicult to produce, since they are of low complexity.
At least in this context, the ``cost'' of the passage from $\left\{R_i/\sqrt{n}\right\}$ and the weakly covergent 
$\left\{ S_n/\sqrt{n}\right\}$ to the (trim) strong triangular array representations of the almost surely convergent $\left\{S^*_n/\sqrt{n}\right\}$ turns out to be surprisingly modest.  Progress has been made in the direction of extending our methods to a more general setting.  This will be the subject of a planned sequel to this paper.

This paper grows out of Chapters 3 and 4 
of Skyers' dissertation, {[}\ref{Skyers}{]}, 
written with Stanley as advisor.   \text{Dobri\ensuremath{\acute{\text{c}}}}, 
served as a {} ``co-advisor'', and provided the 
inspiration and impetus for the entire project.  
For recent work related to Skorokhod's work, 
in a rather different vein than this paper, see {[}\ref{Berti1}{]}, {[}\ref{Berti2}{]}.
\subsection{Preliminaries, Notation, Conventions}
\label{subsec:1.2}
Let $X$ be a random variable (on {\it any} probability space).  Let $F_X$
be the cumulative distribution of $X$.  By the {\it quantile of } $X$ (denoted
 by $X^*$), we mean the random variable on $(0,1)$
(equipped with Lebesgue measure, $\lambda$, on Borel sets, $\mathcal{B}$) defined
by:
\[ 
X^*(x) := \text{inf} \left \{ t \in \mathbb{R}\vert F_X(t) \geq x\right \}.
\]
\noindent
It is well-known that $X$ and $X^*$ are equal in distribution.
Skorokhod, {[}\ref{Skorokhod}{]}, showed that if 
$\left\{X_n\right\}$ is a sequence of random variables (on {\it any} probability
space) converging weakly to $X$, then the sequence of quantiles,
$X^*_n$, converges almost surely to $X^*$.  

In order to compare the structure of the initial sequence to that 
of the sequence of quantiles, the probability space of the $X_n$ should be 
$((0,1),\ \mathcal{B},\ \lambda )$, as above.  
In what follows, we work exclusively
in this probability space.  

In this paper, $i, j, k, m, n$ will \emph{always} denote non-negative
integers (elements of $\mathbb{N}$).  Most often, we will
have $n > 0$. 
We use $\vert X \vert$ for the 
cardinality of a set, $X$. 
To emphasize that
a union is a {\it disjoint union } we use $\sqcup$ or $\bigsqcup$
rather than the usual $\cup$ or $\bigcup$.  When the nature of
the index set is clear or has been established, we use
$\{a_i\}$ to denote the sequence (possibly finite, possibly multi-indexed)
whose term, for index $i$, is $a_i$.  We use $\chi_{Y,U}$ to denote the characteristic
(or indicator) function of a set $Y$, viewed as a subset of an ambient set $U$, the domain of the
characteristic function.  When $U$ is clear from context, we will omit it in the subscript.  This
notation is intended to cover the situation where $Y$ is a relation, i.e. where $U$ is a set
of $d-\text{tuples}$ for some fixed $d > 1$.

For integers, $n > 0$, and $0 \leq k < 2^n,\ D_{n,k}$ denotes the
$k^{\text{th}}$ level $n$ dyadic interval:  
\[
D_{n,0} = \left ( 0,2^{-n}\right ),
\text{and for } 0 < k < 2^n,\ D_{n,k} = \left [ 2^{-n}k, 2^{-n}(k+1)\right ).
\]
Note that both $S_n$ and $S^*_n$ are constant on each level $n$
dyadic interval.
A permutation, $\pi$,
of $\left \{ 0, 1, \ldots, 2^n-1\right \}$ is {\it admissible} if 
\begin{equation}
\text{for all } k \in  
\left \{ 0, 1, \ldots, 2^n-1\right \}, \text{ all } x \in D_{n,k} 
\text{ and all } y \in D_{n,\pi (k)},\ S^*_n(x) = S_n(y).
\end{equation}

In several places, we will have a function, $\phi$, with domain $(0,1)$, which, for some $n$,
is constant on each of the $D_{n,k}$.  We then use $\text{I}\phi$ (``$\text{I}\phi$'' for
the integer version of $\phi$) to denote the function with domain $\left\{ 0, \ldots , 2^n - 1\right\}$,
whose value at $k$ is the constant value of $\phi$ on $D_{n,k}$.   Thus, for example, $\text{I}S_n$ 
and $\text{I}S^*_n$ will denote the integer versions of $S_n$ and $S^*_n$, respectively.

We adopt a similar convention
for subsets, $X \subseteq (0, 1)$, such that $X$ is a union of level $n$ dyadic intervals.  We will
 then use $\text{I}X$ to denote the set of $k$ such that $D_{n,k} \subseteq X$.  
Strictly speaking, in both cases (function or subset) the dependence
on the specific $n$ involved should be part of the notation, but, in all instances,
this will already be incorporated into the notation used for the specific $\phi$ or $X$ involved.

Our basic complexity estimate is in terms of polynomial time computability.   This notion is robust across different
detailed models of computations, each of which has its own sensible notion of ``elementary operation''.  Accordingly,
as is customary, we omit a detailed development of what is involved in this notion.

If $f$ is a function of $d$ natural number arguments, $f$ is polynomial time computable (P-TIME)
iff for some polynomial, $p(n)$, the value of $f$ can be computed in at most
$p(n)$ elementary operations whenever all arguments are smaller than $2^n$. 
 This is consistent with the usual treatments ( e.g. {[}\ref{Clote}{]} or 
 {[}\ref{Papa}{]}), which, for the most part, treat arguments and values
as bitstrings or vectors of bitstrings, rather than in terms of the encoded natural
numbers.  In some important instances, $f$ will have $1+d$ arguments, the first of which is viewed as
being $n$ itself (the argument of $p$.)  In this context, the requirement is that at most $p(n)$ elementary
operations are required to compute $f\left ( n,x_1,\ldots , x_d\right )$ whenever each $x_i < 2^n$.
Polynomial-time decidable (P-TIME decidable) relations are ones whose characteristic functions are P-TIME.   
\subsection{Summary and Further Discussion of Results}
\label{subsec:1.3}
By Corollary 1, the existence of trim strong triangular array representations of $\left\{S^*_n\right\}$
reduces to the existence of sequences of admissible permutations, which further reduces to
the existence of admissible permutations of $\left\{ 0, \ldots , 2^n-1\right\}$, 
for each $n$.  This is established in part 2 of Lemma 1 of (3.1); the precise statement is that there are 
$\prod_{i=0}^n\left (\binom{n}{i}!\right )$ of them.  This count builds on a more
concrete characterization of admissibility developed, among other things, in (3.1).

In (3.2), Corollary 2 pulls together the statements of Theorem 1, Corollary 1 and part 2 of Lemma 1 
to give the existence proof for trim strong triangular array representations
of $\left\{S^*_n\right\}$.  Proposition 1 builds on Corollary 2 by constructing a strong but non-trim triangular array
representation starting from a trim strong one.  Proposition 2, which also builds on some of the material from (3.1), is
a ``non-persistence'' result in that it shows that in any sequence $\left\{\pi_n\right\}$ of admissible permutations,
$\pi_n$ never persists to be a subfunction of $\pi_{n+1}$, and that in any trim strong triangular array representation
$\left ( R^*_{n,i}\vert n > 0,\ 1 \leq i \leq n\right )$ of $\left\{S^*_n\right\}$, for any $n > 0$, it is never possible for all of the
$R^*_{n,i}$ to  persist to be the corresponding $R^*_{n+1,i}$.  This provides another proof of the second consequence of
the LIL mentioned following Question 1 in (1.1) and highlights some important ways in which the trim strong triangular array representations of $\left\{S^*_n\right\}$ must differ from the simple representation $\left\{ R_i\right\}$ of $\left\{ S_n\right\}$.
\subsubsection{The role of trimness}
\label{subsubsec:1.3.1)}
Proposition 1 shows that trimness does not ``come for free''.  Given that there can be no strong triangular array representation for $\left\{ S^*_n\right\}$ in which each $R^*_{n,i}$ depends on only one bit, trimness is a natural ``next best hope''.
Its central role in Theorem 1 and Corollary 1 is further evidence for its naturality, as is the following equivalent characterization of trimness, suggested by A. Nerode.

 Let $V_n$ be 
$\{ -1 , 1\}^n$.
Let $\mathcal{V}$ be the topological product of the $V_n$ equipped with the discrete topologies and let $V$ be the set of
points of $\mathcal{V}$.  Suppose
that for $n \geq i,\ R'_{n,i}$ is a Rademacher random variable on $(0,1)$ (we are not
necessarily assuming, yet, that $\left\{ R'_{n,i}\right\}$ is a triangular array nor that it
represents $\left\{ S^*_n\right\}$).  This provides us with a transformation, $T$, from $(0,1)$ to
$V$, by taking $T(x)_n := \left ( R'_{n,1}(x), \ldots , R'_{n,n}(x)\right )$.  It is then clear that
the condition:
\[
\text{for all }i \leq n,\ R'_{n,i}\text{ depends  at most on }\varepsilon_1, \ldots \varepsilon_n
\]
is equivalent to the condition:
\[
\text{the associated transformation }T\text{ is Lipschitz-continuous with } \delta = \epsilon.
\]
Then, specializing to the situation where the $R'_{n,i}$ {\it do } furnish a strong triangular array
representation of $\left\{ S^*_n\right\}$, the second displayed formula provides our equivalent
characterization of trimness.
We are grateful to A. Nerode for suggesting that we seek this type of characterization, and for
the observation that the transformation $T$ can be feasibly implemented, since the implementation
would satisfy a strong form of bounded memory; this is an equivalent refomulation of the Lipschitz
continuity.   
\subsubsection{Transition to Complexity:  $2^n$ is a lower bound.}
\label{subsubsec:1.3.2)}  
In (3.3) we introduce the natural encoding, $\Pi$ of a sequence, $\left\{\Pi_n\right\}$, of admissible permutations,
and explicitly begin to deal with complexity issues.  It is here that we really begin
to exploit the ``toolkit'' material developed in (3.1).
We prove Proposition 3,
which establishes that the exponential function, $2^n$, is a lower bound for the complexity
of{\it\ any } such sequence $\left\{\Pi_n\right\}$.  Our approach to this,
and to related questions, will be briefly outlined  in (1.3.5), below and more fully discussed at the start of Section 3.
\subsubsection{Preview of Theorem 2}
\label{subsubsec:1.3.3}
While Corollary 2 settles the question of the existence of trim strong
triangular array representations of $\left \{ S^*_n\right \}$, in Definition 4 ((3.4)) we
explicitly construct the sequence $\left \{ F_n\right \}$ of admissible permutations.
Building on Proposition 4 and Corollary 3, Theorem 2 answers Question 3 affirmatively by establishing that $F$, the natural encoding of this sequence, is P-TIME and can be simply computed in terms of SBC.  We also give an even cleaner and simpler defining expression for $2^n$ in terms of $F$, improving slightly on the proof of Proposition 3.  The discussion of Corollary 4,
proved in (3.5), is deferred until (1.3.5) and Section 3.
\subsubsection{Preview of Theorem 3}
\label{subsubsec:1.3.4}
The affirmative answer to Question 4 is provided by Theorem 3, proved in (4.2).  In (4.1), culminating in Definition 7, 
we construct $\left \{ G_n\right \}$, a variant  of the sequence, $\left \{ F_n\right \}$.  Among
admissible permutations of $\left\{ 0, \ldots, 2^n-1\right\}$, 
$G_n$ is maximal for agreement with the identity function. 
Thus, the extent to which trim strong triangular array representations of
$\left\{ S^*_n\right\}$  {\it must} differ from the canonical representation
of $\left\{ S_n\right\}$ is measured by the extent to which the $G_n$ differ from the identity permutations.

The construction of $\left\{ G_n\right\}$ is also motivated
by a rather different notion of complexity: that of an individual admissible permutation.  This also
motivates the construction of $\left\{ H_n\right\}$,
the other variant of $\left\{ F_n\right\}$ 
(Definition 11 of (4.1)).  We impose additional 
natural properties on the $G_n$ and $H_n$ 
to guarantee that their orbit structures will be 
simpler than the orbit structures of the $F_n$.
While Remark 6 and Definition 7 immediately
make it clear that each $G_n$ is admissible, this is a more substantial issue for the $H_n$ and is
established in Proposition 5 of (4.1), the analogue for $\left \{ H_n\right \}$ of Remark 6.

The analogue of Theorem 2 for the
natural encodings, $G$, of $\left\{ G_n\right\}$, and
$H$, of $\left\{ H_n\right\}$ is provided by Theorem 3.  The expression for $2^n$ in
terms of $G$ is fairly close to the one in terms of $F$, 
but for $H$, we content ourselves with the general lower bound
statement of Proposition 3.  This difference between $F$ and $G$,
on the one hand, and $H$, on the other, is foreshadowed by the discussion
at the end of (3.5), and revisited at the end of (4.1).  Similar issues, also discussed
at the end of (4.1), are obstacles to obtaining a genuine analogue of Corollary 4
for $G$ or $H$.
The proof that each of $G$ and $H$ is P-TIME and is simply and explicitly computed in
terms of SBC proceeds by analogy to Theorem 2,
and follows the general approach sketched in (1.3.5).  
This builds on Proposition 6, which plays the role of 
the combination of Proposition 4 and Corollary 3. 
\subsubsection{Complexity Issues} 
\label{subsubsec:1.3.5}
For the lower bounds, established by Equation (6) of
Proposition 3, Equation (8) of Theorem 2, and
Equation (13) of Theorem 3, the approach is to express $2^n$ 
explicitly and uniformly in $n$, using the sum of at most $n$ values
(including repeated values) of the function involved.  All of the 
corresponding arguments are 
obtained from $n$, very simply, explicitly, and uniformly in $n$.
In Equation (6), there are $n$ distinct values involved, 
and the sum of these values is incremented by 1 to obtain $2^n$.  In Equation (8), for $F$,
there is just one value, repeated twice.  Finally, for Equation (13), for $G$, 
there is also just one value, repeated twice, but that value is the maximum of
two values.  In both of the latter cases, $2^n$ is simply the sum of the repeated values (i.e.,
twice the repeated value, but the point is to do things using only addition and no multiplication).

The proofs that $F,\ G$ and $H$ are P-TIME 
have a common structure, though the cases
in the definitions of $G,\ H$ result in technical complications,
especially for $H$.  Here, accordingly, it is only for $F$ (Theorem 2)
that we will outline the main ideas of the proof.

In subsections (3.1) and (3.4), we introduce the function SBC (in Definition 1) and 
a number of other auxiliary functions and relations, notably the two  functions, $\text{IStep}$ 
and $\text{EW}$.  The function $\text{IStep}$
computes ``positions'' with respect to $\text{SBC}$ and is introduced in
the comments following Remark 1, in (3.1).  We introduce
$\text{EW}$ in Definition 5 of (3.4).  It is the enumerating function for
$\text{Weight}$ (Definition 2:  the usual Hamming weight of a positive integer).  The 
sequence http://oeis.org, 2010, Sequence A066884,  {[}\ref{OEIS}{]}, 
encodes $\text{EW}$.  $\text{IStep}$
bears the same relationship to $\left\{ S^*_n\right\}$ 
as $\text{Weight}$ does to $\left\{ S_n\right\}$.

In subsection (3.4), 
Proposition 4 establishes, among other things, that 
for all $n$ and all $i \leq n$ the computation of 
$\text{SBC}(n,i)$ can be carried out using
$O\left ( n^2\right )$ additions of  integers all
below $2^n$, with all of the intermediate sums being less than $2^n$.
This guarantees that the function $\text{IStep}$ is
P-TIME.  

The corresponding result for the function $\text{EW}$ is
given by Corollary 3 which also establishes that the relation expressed 
by Equation (7 ) is P-TIME decidable.  Corollary 3 is the culmination of a 
sequence of results (Proposition 4, Lemmas 2 and 3)
where the notion of  ``tame'' relation, introduced in Definition 3, is a key ingredient. 
Lemma 2 establishes that the enumerating function (viz. Definition 3) for a tame relation is P-TIME.
Lemma 3 establishes the tameness of the relation whose enumerating function is $\text{EW}$ 
(and thus, with Lemma 2, that $\text{EW}$ is P-TIME).  It follows from the proof of Lemma 3
that $\text{EW}$ itself can be simply computed in terms of SBC and that solutions, $m$, of
Equation 7 can be computed in polynomial time as functions of $n$ and $k$.

The proof of Theorem 2 then proceeds by showing that  for $n > 0$ and 
$k < 2^n,\ F(n,k) = m < 2^n$ is the unique 
solution of Equation (7).    We are very grateful to S. Buss 
who provided invaluable assistance on several occasions.  In particular,
he confirmed the validity of our approach to Proposition 4, pointed out 
the references given there and supplied a very nice
argument that evolved into the proof of Lemma 3.  He also suggested combining
these ideas with binary search (a variant of the Bisection Algorithm).  In the course of working
out the details, we isolated the notion of tameness and formulated and proved Lemma 2.
A bit more detail on the material of this subsubsection is given in the overview of Section 3.
\section{Theorem 1 and Corollary 1}
\label{sec:2}
\subsection{Preliminaries for Theorem 1}
\label{subsec:2.1}
For $x \in (0,1)$, we set: 
\[
\mathbf{r}_n(x) := 
\left ( \varepsilon_1(x), \ldots , \varepsilon_n(x)\right ),
\]
\[
\mathbf{s}_n(x) := 
\left ((-1)^{1+ \varepsilon_1(x)}, \ldots , (-1)^{1+\varepsilon_n(x)}\right ).
\]
On $D_{n,k},\ \mathbf{r}_n(x), \mathbf{s}_n(x)$  are constant.  Therefore, following
the convention in the final paragraph of (1.3), we denote 
these constant values by $\text{I}\mathbf{r}_n(k),  \text{I}\mathbf{s}_n(k)$,
respectively.

Note that $\text{I}\mathbf{r}_n(k)$ does NOT denote the kth component of a vector,
$\text{I}\mathbf{r}_n$, rather it denotes the vector (a length
$n$ bitstring) itself.  We will denote the $i^{\text{th}}$ component
of this vector by $\left ( \text{I}\mathbf{r}_n(k)\right )_i$.  Note that
this is just $\varepsilon_i(x)$ for any $x \in D_{n,k}$.  Similar observations
hold with $\mathbf{s}$ in place of $\mathbf{r}$ (and $\{ -1, 1\}$ replacing $\{ 0, 1\}$).

Note, further, that
$\text{I}\mathbf{r}_n(k)$ is the reversal of the binary representation of 
$k$:\ \ $\displaystyle{k = \sum_{i=1}^n 2^{n+1 - i}\left ( \text{I}\mathbf{r}_n(k)\right )_i}$, 
and that $\left ( \text{I}\mathbf{r}_n(k)\vert\, k < 2^n\right )$ 
enumerates $\{ 0, 1\}^n$ in increasing order with respect to the lexicographic 
ordering.  For $\mathbf{r} \in \{ 0, 1\}^n$, we also let:
\[ 
D_{\mathbf{r}} := \{ x \in (0,1)|\mathbf{r}_n(x) = \mathbf{r}\}.  
\]
Letting $k$
be such that $\mathbf{r} = \text{I}\mathbf{r}_n(k)$, we note that $D_{\mathbf{r}} = D_{n,k} = 
\left\{ x \in (0,1)\vert \mathbf{r}_n(x) = \text{I}\mathbf{r}_n(k)\right\}$.
 
We use $\nu_n$ to denote the order isomorphism
(with respect to lexicographic order) between 
$\{ 0, 1\}^n$ and
$\{ -1, 1\}^n$, thus for 
$\mathbf{r} \in
\{ 0, 1\}^n$ and $1 \leq i \leq n,\ 
\left ( \nu_n(\mathbf{r})\right )_i = (-1)^{1+(\mathbf{r})_i}$. 
Note that $\text{I}\mathbf{s}_n(k) = \nu_n\left ( \text{I}\mathbf{r}_n(k)\right )$.
It is also worth noting that \emph{any} permutation, $\pi$, of $\left\{ 0, \ldots, 2^n-1\right\}$
is naturally viewed as a permutation of $\{ 0,1\}^n$, by taking
$\pi\left ( \text{I}\mathbf{r}_n(k)\right )$ as defined to be $\text{I}\mathbf{r}_n(\pi(k))$, and similarly
with $\{ -1, 1\}$ replacing $\{ 0, 1\}$ and $\mathbf{s}$ replacing
$\mathbf{r}$.
\subsection{Theorem 1 and Corollary 1}
\label{subsec:2.2}
\begin{theorem}  For each $n$, there is a canonical bijection
between admissible permutations of 
$\left\{ 0, \ldots , 2^n-1\right \}$ and representations of $S^*_n$ as a sum
\[
S^*_n = \sum_{i=1}^nR^*_{n,i},
\]
where $\left ( R^*_{n,i}\vert\, 1 \leq i \leq n\right )$ is an
i.i.d. family of Rademacher random variables each of which 
has mean 0, variance 1 and depends 
only on $\mathbf{r}_n$.  
\end{theorem}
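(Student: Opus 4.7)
The plan is to exhibit the canonical bijection by giving both directions explicitly and then checking they are mutually inverse.

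Starting from an admissible permutation $\pi$ of $\{0, \ldots , 2^n-1\}$, I define, for $x \in D_{n,k}$,
\[
R^*_{n,i}(x) \;:=\; \bigl(\text{I}\mathbf{s}_n(\pi(k))\bigr)_i,
\]
i.e.\ the $i^{\text{th}}$ coordinate of the $\pm 1$-string indexed by $\pi(k)$. By construction each $R^*_{n,i}$ is constant on every $D_{n,k}$, so it depends only on $\mathbf{r}_n$. Summing over $i$ on $D_{n,k}$ gives $\sum_{i=1}^{n} R^*_{n,i}(x) = S_n(y)$ for any $y \in D_{n,\pi(k)}$, which by admissibility (Equation (1)) is exactly $S^*_n(x)$.

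Next I would check the distributional hypotheses. The map $k \mapsto \text{I}\mathbf{s}_n(k)$ is a bijection between $\{0, \ldots , 2^n - 1\}$ and $\{-1,1\}^n$, and composing with the permutation $\pi$ preserves this. Hence, as $k$ ranges over level $n$ dyadic indices (each $D_{n,k}$ having Lebesgue measure $2^{-n}$), the tuple $(R^*_{n,1}(x),\ldots , R^*_{n,n}(x))$ assumes each element of $\{-1,1\}^n$ on a set of measure exactly $2^{-n}$. This is precisely the joint law of an i.i.d.\ family of $n$ mean-zero, variance-one symmetric Rademacher variables.

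For the reverse direction, given such a family $(R^*_{n,i})$, the hypothesis that each $R^*_{n,i}$ depends only on $\mathbf{r}_n$ makes it constant on each $D_{n,k}$; let $\text{I}R^*_{n,i}(k)$ denote this constant value. The i.i.d.\ hypothesis says each string in $\{-1,1\}^n$ is attained by $(R^*_{n,1},\ldots ,R^*_{n,n})$ on a set of measure $2^{-n}$, and trimness forces this set to be a single level $n$ dyadic interval. So $k \mapsto (\text{I}R^*_{n,1}(k),\ldots ,\text{I}R^*_{n,n}(k))$ is a bijection onto $\{-1,1\}^n$, and composing with the inverse of $\text{I}\mathbf{s}_n$ defines a permutation $\pi$ of $\{0,\ldots ,2^n -1\}$. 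The identity $S^*_n = \sum R^*_{n,i}$ then translates, on each $D_{n,k}$, into the admissibility condition for $\pi$.

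A short check that the two assignments are mutually inverse—both compositions reduce, level by level, to applying $\text{I}\mathbf{s}_n$ and then its inverse (or vice versa)—completes the proof. The one slightly delicate point will be the distributional step: the passage from ``i.i.d.\ Rademacher'' to ``uniform on $\{-1,1\}^n$ realized exactly once on each dyadic interval'' is where the hypothesis of dependence only on $\mathbf{r}_n$ is essential, since without it one would only know the joint law on some measurable partition of $(0,1)$ rather than on the dyadic intervals themselves, and the bijection with permutations of $\{0,\ldots ,2^n-1\}$ would break down.
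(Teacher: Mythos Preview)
Your proposal is correct and follows essentially the same approach as the paper: the forward map $\pi \mapsto (R^*_{n,i})$ via $R^*_{n,i}(x) = (\text{I}\mathbf{s}_n(\pi(k)))_i$ and the reverse map via $\pi(k) = (\text{I}\mathbf{s}_n)^{-1}((R^*_{n,i}(x))_i)$ are exactly the paper's Equations (2) and (3). Your distributional verification is slightly more streamlined than the paper's---you argue directly that the joint law is uniform on $\{-1,1\}^n$, whereas the paper first checks each marginal is symmetric Rademacher and then separately verifies independence via the joint pmf---but this is a packaging difference, not a different route.
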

\begin{proof}
Fix $n > 0$.  We first construct the $R^*_{n,i}$, given $\pi$, and then construct $\pi$ given the  
$R^*_{n,i}$.  We then carry out the necessary verifications in each direction. 

First, let $\pi$ be an admissible permutation of
$\left\{ 0, \ldots , 2^n-1\right\}$.  For
$x \in (0, 1)$, let $k$ be such that $x \in D_{n,k}$, and let $1 \leq i \leq n$.    
Then:    
\begin{equation}
\text{let } y \text{ be any member of } D_{n,\pi (k)} \text{ and define:  }  R^*_{n,i}(x) := (-1)^{1+ \varepsilon_i(y)}.
\end{equation}
\noindent

Conversely, given an independent family, $\left ( R^*_{n,i}\vert 1 \leq i \leq n\right )$, such that
$\displaystyle{S^*_n = \sum_{i=1}^nR^*_{n,i}}$, where
each $R^*_{n,i}$ is Rademacher with mean 0 and variance 1 and depends
only on $\mathbf{r}_n(x)$, we obtain $\pi$ as follows.  Given $k < 2^n$, let
$x \in D_{n,k}$, let $\mathbf{s} = \left ( R^*_{n,i}(x)\vert 1 \leq i \leq n\right )$ and
define:
\begin{equation}
\pi (k)\ =\ \text{that } m < 2^n\ \text{such that } \mathbf{s} = \text{I}\mathbf{s}_n(m).
\end{equation}
Clearly these constructions yield a bijection, so we turn to the necessary verifications.

First suppose $\pi$ is admissible and that the 
$R^*_{n,i}$ are defined by Equation (2). 
Clearly these $R^*_{n,i}$ depend only on $\mathbf{r}_n(x)$.
In order to see that they sum to $S^*_n$, note that:
\[
\text{for all } k < 2^n \text{ and all } x \in D_{n,k},\
S^*_n(x) = S_n(y) \text{,  for any } y \in D_{n,\pi (k)},
\]
\[
\text{i.e. } S^*_n(x)
= \sum_{i=1}^n(-1)^{1+\varepsilon_i(y)} \text{, for any such } y,
\text{i.e. } S^*_n(x) = \sum_{i=1}^nR^*_{n,i}(x);\text{ this suffices.}
\]

To see that each $R^*_{n,i}$ is Rademacher with mean 0 and variance 1, 
fix $i$ and  $\epsilon\in\left\{ 0,1\right\}$  and let 
$A:=\left\{ \mathbf{t}\in\left\{ 0,1\right\} ^n\big\vert 
t_i=\epsilon\right\}$; then
$\vert A\vert = 2^{n-1}$. Since $\pi$
is 1-1, $\left\vert \pi^{-1}\left[A\right]\right\vert = 2^{n-1}$. Now, 
viewing $\pi$ as a permutation of $\{0, 1\}^n$, we have that:
\[
\pi^{-1}\left[A\right] = 
\left\{ \mathbf{r}\in\left\{ 0,1\right\} ^n\big\vert
\left( \pi\left( \mathbf{r}\right)\right)_i=
\epsilon\right\} \text{ and }
\left\{ x\vert\left( \pi\left( \mathbf{r}_n(x) \right)\right)_i=\epsilon\right\} =  
\bigsqcup_{\mathbf{r}\in \pi^{-1}\left[A\right]}
 D_{\mathbf{r}}.
\]
\noindent
It follows that:  
\[
\lambda\left(\left\{ x\vert\left( \pi\left( \mathbf{r}_n(x)\right )\right )_i =
\epsilon\right\} \right) =
\lambda\left(
\bigsqcup_{\mathbf{r}\in \pi^{-1}\left[A\right]}
D_{\mathbf{r}}\right ) = 2^{n-1}\cdot2^{-n}=1/2;\text{ this suffices.}
\]

In order to see that these $R^*_{n,i}$ are independent,
it suffices to show that:
\[
\text{for all } \mathbf{s} = \left ( s_1, \ldots , s_n\right ) \in\left\{ -1,1\right\}^n,\
p\left(s_1,\ldots,s_n\right) = 
p_1\left(s_1\right)\cdot\ldots\cdot p_n\left(s_n\right),
\]
where $p$ is the joint pmf of the $R^*_{n,i}$ and $p_i$
is the pmf of $R^*_{n,i}$ alone.  
We showed that $p_1\left(s_1\right)\cdot\ldots\cdot p_n\left(s_n\right) =
2^{-n}$, so,
again viewing $\pi$ as a permutation
of $\{ 0, 1\}^n$, let 
$\mathbf{r} := \pi^{-1}\circ \left ( \nu_n\right )^{-1}(\mathbf{s})$ and note that:
\[
P\left(R^*_{n,1}=s_1,\ldots,R^*_{n,n}=s_n\right) = 
\lambda\left(\left\{ x\vert \pi\left ( \mathbf{r}_n(x)\right ) = 
\nu_n^{-1}(\mathbf{s}\right\} \right) = 
\lambda\left(D_{\mathbf{r}}\right) = 2^{-n}.  
\]

For the opposite direction, suppose that $\left ( R^*_{n,i}\vert 1 \leq i \leq n\right )$
is given with the stated properties.  Let $\pi$ be defined
by Equation (3). 
We first show that $\pi$ is one-to-one.  For this, let $x\in D_{n,k},\ 
\mathbf{s} = \left ( R^*_{n,i}(x)\vert 1 \leq i \leq n\right )$ and note that if
$\mathbf{u} \in \{ 0, 1\}^n$ is such that 
\[
\text{for } y \in D_{\mathbf{u}},\ \left ( R^*_{n,i}(y)\vert 1 \leq i \leq n\right ) =
\mathbf{s} \text{, then } \mathbf{u} = \text{I}\mathbf{r}_n(k).  
\]
If this
were to fail we would have that 
\[
P\left(R^*_{n,1}=s_1,\ldots,R^*_{n,n}=s_n\right) \geq\  
\lambda\left(D_{\mathbf{u}}\right)+
\lambda\left(D_{\text{I}\mathbf{r}_n(k)}\right)=2^{-n+1}, 
\]
which contradicts
our hypotheses on the $R^*_{n,i}$.  Thus, $f$ is one-to-one.  Admissibility
then follows, because now, by hypothesis, if $x \in D_{n,k}$ and $m = \pi (k)$, then:
\[
S^*_n(x) = \sum_{i=1}^nR^*_{n,i}(x) = \sum_{i=1}^ns_i \text{, but also 
for any } y \in D_{n,m},\ S_n(y) = \sum_{i=1}^ns_i,
\]
\noindent
as required.  
\end{proof}
\begin{corollary}  There is a canonical bijection between
sequences, $\left\{ \pi_n\right\}$,
of admissible permutations of $\left\{ 0, \ldots , 2^n-1\right \}$ and trim,
strong triangular arrays for 
$\left\{ S^*_n\right\}$.
\qed
\end{corollary}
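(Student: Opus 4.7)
The plan is to reduce everything to Theorem 1, applied independently at each level $n$. Unpacking the definitions, a \emph{trim, strong triangular array representation} of $\{S^*_n\}$ is, by the terminology fixed in the introduction, a doubly-indexed family $(R^*_{n,i}\mid n\ge 1,\ 1\le i\le n)$ such that for every $n$, the row $(R^*_{n,i}\mid 1\le i\le n)$ is an i.i.d.\ family of mean $0$, variance $1$ Rademacher variables depending only on $\mathbf{r}_n$, and satisfying $S^*_n=\sum_{i=1}^n R^*_{n,i}$ pointwise on $(0,1)$. This is precisely the row-by-row data to which Theorem 1 applies.

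The forward map is: given a sequence $\{\pi_n\}$ of admissible permutations, define, for each $n$, the row $(R^*_{n,i}\mid 1\le i\le n)$ by Equation (2) of Theorem 1 applied to $\pi_n$. By Theorem 1 applied at level $n$, each such row has all of the required properties; assembling the rows yields a trim, strong triangular array representation. The reverse map is: given such an array, for each $n$ define $\pi_n$ by Equation (3) of Theorem 1 applied to the $n$-th row. Again, Theorem 1 guarantees that each $\pi_n$ is an admissible permutation of $\{0,\ldots,2^n-1\}$.

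The verification that these two constructions are mutual inverses is immediate: at each fixed $n$, the constructions specialize to the bijection of Theorem 1, which was already shown to be one-to-one and onto between admissible permutations of $\{0,\ldots,2^n-1\}$ and $n$-th rows of the form required. Because both the admissibility condition and the row conditions are conditions purely at level $n$, with no coupling between levels, no further compatibility between different $n$'s needs to be checked.

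There is no genuine obstacle beyond bookkeeping; the only potentially subtle point is keeping straight that trimness, independence, the Rademacher/mean-zero/variance-one conditions, and the pointwise identity $S^*_n=\sum_i R^*_{n,i}$ are all level-$n$ conditions, so the bijection of Theorem 1 lifts to sequences componentwise and therefore canonically.
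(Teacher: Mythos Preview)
Your proposal is correct and takes essentially the same approach as the paper: the paper simply writes \qed after the statement, treating the corollary as the immediate componentwise extension of Theorem~1 to sequences, which is exactly what you have spelled out. Your observation that there is no coupling between levels is the only point that even needs to be made explicit, and you have made it.
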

Given $n$ and $\left ( R^*_{n,i}\vert 1 \leq i \leq n\right )$, Equation (3) 
is best seen as as a finer version of Equation (1) (the definition of admissible permutation).
Incorporating the additional information in the representations $\left ( R_i\vert 1 \leq i \leq n\right )$
and $\left ( R^*_{n,i}\vert 1 \leq i \leq n\right )$ singles out
a specific admissible permutation, whereas Equation (1) defines the set  of all of them.
Equation (2) reverses this, taking as given the canonical representation, 
$\left ( R_i\vert 1 \leq i \leq n\right )$, together with a specific admissible permuation 
and singling out a specific representation,
$\left ( R^*_{n,i}\vert 1 \leq i \leq n\right )$, of $S^*_n$.
\section{The Road to Theorem 2}
In (3.1) we introduce the notions that will provide
the ``toolkit'' for the rest of the paper, and, in particular
for Theorems 2 and 3.  In Definition 1, we introduce
the functions $\text{Step}_n,\ \text{Weight}_n$ and SBC.
Incorporating Remark 1 then immediately gives us the functions $\text{IStep}_n,\ \text{I}S\text{ and I}S^*$.  These are
the ``integer versions'' of the functions $\text{Step}_n,\ S_n\text{ and }S^*_n$, respectively.  We also
introduce the function $\text{IStep}$ which is the natural encoding
(as a function of two variables) of the family of the 
$\text{IStep}_n$.   

In Definition 2, we introduce the sets $A_{n,i}$ and $B_{n,i}$ and
their ``integer versions'' $\text{I}A_{n,i}$ and
$\text{I}B_{n,i}$.  Analogues of these are given
in (4.1) in the construction of the $\left\{ G_n\right\},\ 
\left\{ H_n\right\}$ and their natural encodings (see (3.3))
by the functions $G$ and $H$.

The $\text{I}A_{n,i}$ and $\text{I}B_{n,i}$
are the motivating paradigm for Definition 3, which is particularly important.  
It is given in abstract
form to accommodate five different invocations:  one
in (3.1) itself, in connection with the $\text{I}A_{n,i},\ 
\text{I}B_{n,i}$, and the other four in (4.1), in connection with
their analogues.  The invocation of Definition 3 in (3.1)
introduces the three-place relations $\text{RI}A$ and
$\text{RI}B$, which encode the families $\text{I}A_{n,i}$
and $\text{I}B_{n,i}$ respectively, their associated
cardinality functions, $\text{cd}_{\text{RI}A}$ and
$\text{cd}_{\text{RI}B}$ and enumerating functions,
$\text{ERI}A$ and $\text{ERI}B$.  The notion of
``tameness'' is also introduced in Definition 3; it  plays
an important role in the complexity analyses
carried out in subsections (3.4) and (4.2).  

The cardinality functions associated with P-TIME decidable
relations form the somewhat unusual complexity class, 
$\#P$ (see, e.g.,  {[}\ref{Clote}{]}, (6.3.5)), which, conjecturally,
is not included in the collection of  P-TIME functions.  In addition
to all of the other ways we appeal to tameness, it also guarantees (Item 6. of
Remark 3) that the relation itself is P-TIME decidable and that (by definition)
its associated cardinality function is P-TIME, allowing us to
avoid the issue of $\#P$.

We elaborate a bit on the sketch (in (1.3.3) and (1.3.5)) of what
is accomplished in subsections (3.4) and (3.5), since 
the material of  subsections (3.2) and (3.3) has
already been fully presented in subsection (1.3).
In (3.4), Proposition 4  establishes that 
for all $n$ and all $i \leq n$, the binomial coefficient, $\binom{n}{i}$
and $\text{SBC}(n,i)$ can be computed using $O(n^2)$ additions
of integers all below $2^n$ with all intermediate sums being less than
$2^n$ and so, in particular, in time polynomial in $n$.
Consequently, the function $\text{IStep}$ is
P-TIME, and the relation $\text{RI}A$ is tame.  This sets
the stage for Lemmas 2 and 3, Corollary 3 and Theorem 2.  In (3.5),
Corollary 4 sheds additional light on the relationship between $F$
and SBC by showing that SBC can be simply computed
in terms of $F$ and the function, which we denote by $\text{Inv}F$, which
is the natural encoding of the sequence $\left\{ F^{-1}_n\right\}$.  
We conclude (3.5) by arguing for the (informal and therefore
necessarily imprecise) thesis that $\left\{ F_n\right\}$ is,  in fact, is the {\it simplest}
sequence of admissible permutations.  The argument appeals to the view of admissible permutations
developed at the end of (3.1).
\subsection{Toolkit for Theorems 2 and 3}
\label{subsec:3.1}
As usual, $F_{B(n, p)}$ denotes the cumulative binomial distribution 
with parameters $n, p$.  
\begin{definition}   For $n >  0$, set $\text{SBC}(n,0) := 0$
and for $1 \leq i \leq n + 1$, set 
\[
\text{SBC}(n,i) := 
\sum_{j=0}^{i-1}\ \binom{n}{j}.
\]
\noindent
For $n > 0$, and $x \in (0,1)$,
$\displaystyle{\text{Step}_n(x)}$ is the unique $i$
such such that $F_{B(n, 1/2)}(i) \leq x < F_{B(n, 1/2)}(i+1)$.  We also set
$\displaystyle{\text{Weight}_n(x) := \sum_{i=1}^n\varepsilon_i(x)}$.
\end{definition}
\begin{remark}  For $x \in (0,1)$. 
and $n > 0$, the following observations are obvious:
\begin{enumerate}
\item  $S_n(x) = -n + 2\text{Weight}_n(x)$,

\item  $\text{Weight}_n(x)$ is the usual Hamming weight of $\mathbf{r}_n(x)$,

\item  $\text{Step}_n(x)$ is the unique $i$ such that $\text{SBC}(n,i) \leq x2^n < 
\text{SBC}(n,i+1)$ ,

\item  $S^*_n(x) = -n + 2\text{Step}_n(x)$.

\item  $\text{Step}_n(x),\ \text{Weight}_n(x)$ depend at most on $\mathbf{r}_n(x)$.\qed
\end{enumerate}
\end{remark}
In view of (1.3) and 5. of Remark 1, we have defined $\text{IStep}_n(k),\
\text{IWeight}_n(k)$.  We already knew that $S_n$ and $S^*_n$ also depend at most
on $\mathbf{r}_n(x)$ and thus we have also defined $\text{I}S^*_n(k),\ \text{I}S_n(k)$ for
$k < 2^n$; by the usual identification, we have also defined 
$\text{I}S^*_n(\mathbf{r}),\ \text{I}S_n(\mathbf{r})$ for
$\mathbf{r} \in \{ 0, 1\}^n$.  Also, $\text{IWeight}_n(k)$ is
the usual Hamming weight of the binary representation of $k$ and
is therefore independent of $n$, so henceforth this will simply
be denoted by $\text{Weight}(k)$.  Similarly 
$\text{Weight}(\mathbf{r})$ denotes the usual Hamming weight
of $\mathbf{r}$ for finite bitstrings, $\mathbf{r}$.
{\it In what follows, we shall use the notation }$\text{IStep}(n,k)$
rather than $\text{IStep}_n(k)$.  
The following is then also obvious 
\begin{remark}  For $n > 0$ and $k < 2^n$:
\begin{enumerate}
\item  $\text{IStep}(n,0) = 0$ and for $0 < k < 2^n ,\ 
\text{IStep}(n,k)$ is the least positive $i \leq n$ such that 
$\displaystyle{k < \text{SBC}(n,i+1)}$,

\item  For all $i \in \mathbb{N},\ 2^i-1$ is the least $k$ such that
$\text{Weight}(k) = i$ and $2^n - 2^{n-i}$ is the largest
$k < 2^n$ such that $\text{Weight}(k) = i$,

\item  For all $0 \leq i \leq n,\
\text{SBC}(n,i)$ is the least $k$ such that $\text{IStep}(n,k) = i.$\qed
\end{enumerate}
\end{remark} 
\begin{definition}  For   
$i \leq n$, we define $A_{n,i},\ B_{n,i}$ by:
\[
A_{n,i} :=
\{ x \in (0,1)\vert \text{Step}_n(x)\ = i\},
\]
\[ 
B_{n,i} := \{ x \in (0,1)\vert \text{Weight}_n(x)\ = i\}.
\]
In view of Remark 2, for fixed $n > 0$, each of the $A_{n,i},\ B_{n,i}$ is the 
union of level $n$ dyadic intervals, and therefore,
in view of the last paragraph of (1.3), $\text{I}A_{n,i},\ \text{I}B_{n,i}$ 
will be used to denote the corresponding subsets of $\left\{ 0, \ldots , 2^n-1\right\}$\ 
(or of $\{ 0, 1\}^n$, via the usual identification) :\
\begin{equation}
\text{I}A_{n,i} := \left\{ k < 2^n\vert D_{n,k} \subseteq A_{n,i}\right\}\text{ and }
\text{I}B_{n,i} := \left\{ k < 2^n\vert D_{n,k} \subseteq B_{n,i}\right\}.
\end{equation}
We also let $\alpha_{n,i} :=  \left\vert\text{I}A_{n,i}\right\vert,\
\beta_{n,i} :=  \left\vert\text{I}B_{n,i}\right\vert$ and for positive integers, $x < 2^n$,
we let $\alpha (n,i,x) =  \left\vert\text{I}A_{n,i} \cap \{1,\ldots , x\}\right\vert,\
\beta (n,i,x) =  \left\vert\text{I}B_{n,i} \cap \{1,\ldots , x\}  \right\vert$.
\end{definition}

This provides the motivating paradigm for the next definition, taking $d = 1,\
U_n = \{ 0, \ldots, n\}$ and $X_{n,i} = \text{I}A_{n,i}$ or $X_{n,i} = \text{I}B_{n,i}$.

\begin{definition} Suppose that $d \in \mathbb{Z}^+$ and that for $n > 0,\ U_n
\subseteq \{ 0, \ldots , n\}^d$.  
Suppose, further, that for $u \in U_n$, we have non-empty
$X_{n,u} \subseteq \left\{ 0, \ldots , 2^n -1\right\}$, with the
increasing enumeration of $X_{n,u}$ denoted by
$\left ( x_{n,u,s}\vert 1 \leq s \leq \left\vert X_{n,u}\right\vert \right )$.  
We define $\text{R}X(n, u, \ell )$ to be that $d+2-\text{place}$ relation
on $\mathbb{N}$ such that
\[
\text{R}X(n, u, \ell )\text{\ iff\ } \ell \in X_{n,u}.
\]
The {\it cardinality function associated with }$\text{R}X$ is the
function $\text{cd}_{\text{R}X}$ which, for $n >0,\ u \in U_n$
and positive integers, $x < 2^n$, assigns to $(n,u,x),\ \text{cd}_{\text{R}X}(n,u,x) :=
\left\vert X_{n,u} \cap \{1,\ldots , x\}\right\vert$.  
We denote by $\text{ER}X$ the enumerating function
for $\text{R}X$:  $\text{ER}X(n,u,s) := x_{n,u,s}$ for
$1 \leq s \leq \left\vert X_{n,u}\right\vert$. 
We will call $\text{R}X$ {\it tame } if $\text{cd}_{\text{R}X}$ is P-TIME.
\end{definition}

In our invocations of Definition 3,
we will have $d = 1$ and $U_n = \{ 0, \ldots , n\}$ (for the first
two invocations, and the last one) or $d = 2$
and $U_n =\{ (i,j) \vert i, j \leq n, i \neq j\}$ (for the third and fourth invocations).
In all of our invocations, it will be true that for fixed $n,\ \left\{ X_{n,u}\vert
u \in U_n\right\}$ will be a pairwise disjoint family, but we have not built this into
the definition of tame.

{\bf We now invoke Definition 3 with $d = 1$ and  $U_n = \{ 0, \ldots , n\}$
and with }$X_{n,i} = \text{I}A_{n,i}\ ${\bf or } $X_{n,i} = \text{I}B_{n,i}$.  
{\it This defines }$\text{RI}A,\ \text{RI}B,\ \text{cd}_{\text{RI}A},\
\text{cd}_{\text{RI}B},\ \text{ERI}A,\ \text{ERI}B$.
{\it The notation for the increasing enumerations will be }$a_{n,i,s},\ b_{n,i,s}$.
\begin{remark}  The following observations are immediate, with the exception of
item 6.
\begin{enumerate}
\item  For fixed $n, i, j$ we'll have that $\text{RI}A(n,i, \ell )$ holds 
$n > 0, 0 \leq i \leq n,\ 0 \leq  \ell < 2^n$ and $\text{IStep}(n,\ell ) = i$. 
Similarly, $\text{RI}B(n,j,\ell )$ holds iff $n > 0, 0 \leq j \leq n,\ 0 \leq \ell < 2^n$ and 
$\text{Weight}(\ell ) = j$.  
\item  $\text{cd}_{\text{RI}A}$ is 
the function $\alpha$ of Definition 2 while $\text{cd}_{\text{RI}B}$ is 
the function $\beta$ of Definition 2.  If $x < \text{SBC}(n,i)$, then $\alpha (n,i,x) = 0$,
and if $x \geq \text{SBC}(n,i+1)$, then $\alpha(n,i,x) = \binom{n}{i} = \text{SBC}(n,i+1)-\text{SBC}(n,i)$.
For $\text{SBC}(n,i) \leq x < \text{SBC}(n,i+1),\ \alpha(n,i,x) = x+1-\text{SBC}(n,i)$.
\item For $n > 0$,
$\alpha_{n,i} = \left\vert \text{I}A_{n,i}\right\vert = 
 \binom{n}{i} = \left\vert \text{I}B_{n,i}\right\vert = \beta_{n,i}$.
\item  If $a = a_{n,i,s}$, then $s = \alpha (n,i,a)$ and $a =\text{ERI}A(n,i,s)$; the
analogous statements hold, replacing $A$ with $B$, $\alpha$ with $\beta$ and
all occurrences of $a$ with $b$.
\item  For any system $X_{n,u}$ as in Definition 3, if $u \in U_n,\
a < b < 2^n$, then $\text{cd}_{\text{R}X}(n,u,b) - 
\text{cd}_{\text{R}X}(n,u,a) = \left\vert \left\{ x \in X_{n,u}\vert 
a < x \leq b\right\}\right\vert$.
\item Suppose that $\left\{ (n,u)\vert u \in U_n\right\}$ is P-TIME decidable.  
Then, if $\text{\textnormal{R}}X$ is tame, it is also P-TIME decidable.
\end{enumerate} 
\end{remark} 
\begin{proof}
Items 1. - 5. are obvious.  For item 6., note that for $n > 0,\ u \in U_n$ and 
$x \in \mathbb{Z}^+$ with $x < 2^n,\ \text{R}X(n,u,x)$ holds iff either $x = 
\text{cd}_{\text{R}X}(n,u,x) = 1$ or\\
\noindent
$\left (
 x > 1\text{ and }
\text{cd}_{\text{R}X}(n,u,x) = \text{cd}_{\text{R}X}(n,u,x-1) + 1\right )$.
\end{proof}
\begin{lemma}  Let $n > 0$.  Then:
\begin{enumerate}
\item  If $\pi$ is a permutation of 
$\left\{ 0, \ldots , 2^n-1\right\}$,
the admissibility of $\pi$  is equivalent to each of the 
following conditions:
\begin{enumerate} 
\item  for all $k < 2^n,\
\text{Weight}(\pi (k)) = \text{IStep}(n, k)$,
\item  for all $i < n,\ \pi\left [ \text{I}A_{n,i}\right ] =
\text{I}B_{n,i}$,
\item  $\text{I}S^*_n = \text{I}S_n \circ \pi$.
\end{enumerate}  
\item  There are $\prod_{i=0}^n \left (\binom{n}{i}!\right )$
admissible permutations of $\left\{ 0, \ldots , 2^n-1\right \}.$ 
\end{enumerate}
\end{lemma}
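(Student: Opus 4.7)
The plan is to establish Part 1 by proving the four-way equivalence through the cycle \emph{Admissibility} $\Leftrightarrow$ (c) $\Leftrightarrow$ (a) $\Leftrightarrow$ (b), relying on Remark 1 to convert between $S_n,\ S^*_n$ and their Weight/Step counterparts, and then to read off Part 2 from characterization (b).

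First, I would unfold Equation (1). The condition $S^*_n(x) = S_n(y)$ for all $x \in D_{n,k}$ and $y \in D_{n,\pi(k)}$ says exactly that the common value of $S^*_n$ on $D_{n,k}$ equals the common value of $S_n$ on $D_{n,\pi(k)}$; in integer-version form (from the conventions of (1.2)) this is $\text{I}S^*_n(k) = \text{I}S_n(\pi(k))$ for every $k < 2^n$, i.e., (c). To pass from (c) to (a), I would apply items 1 and 4 of Remark 1, which in integer form read $\text{I}S_n(k) = -n + 2\text{Weight}(k)$ and $\text{I}S^*_n(k) = -n + 2\text{IStep}(n,k)$. Substituting, cancelling $-n$, and halving turn (c) into $\text{Weight}(\pi(k)) = \text{IStep}(n,k)$, i.e., (a), and the transformation is clearly reversible.

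For (a) $\Leftrightarrow$ (b), I would observe that by Definition 2 we have $\text{I}A_{n,i} = \{k : \text{IStep}(n,k) = i\}$ and $\text{I}B_{n,i} = \{k : \text{Weight}(k) = i\}$, so (a) is precisely the assertion that $\pi[\text{I}A_{n,i}] \subseteq \text{I}B_{n,i}$ for every $i \leq n$. Both $\{\text{I}A_{n,i}\}_{i \leq n}$ and $\{\text{I}B_{n,i}\}_{i \leq n}$ partition $\{0,\ldots,2^n-1\}$, and by item 3 of Remark 3 the matching pieces share the common cardinality $\binom{n}{i}$. Hence a permutation $\pi$ that sends each $\text{I}A_{n,i}$ into $\text{I}B_{n,i}$ must in fact send it bijectively onto $\text{I}B_{n,i}$, yielding (b); the converse is trivial. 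Note that the version of (b) stated in the Lemma restricts to $i < n$, but this is equivalent to $i \leq n$: since $\text{I}A_{n,n} = \text{I}B_{n,n} = \{2^n-1\}$, the $i=n$ case is forced on any permutation $\pi$ by complementarity.

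Part 2 now follows at once: condition (b) exhibits an admissible permutation as the concatenation of independent bijections $\text{I}A_{n,i} \to \text{I}B_{n,i}$, one for each $i \in \{0,\ldots,n\}$, with no constraints linking different indices. Since $|\text{I}A_{n,i}| = |\text{I}B_{n,i}| = \binom{n}{i}$, there are $\binom{n}{i}!$ bijections per index, and the total count is $\prod_{i=0}^n \binom{n}{i}!$. No step is a serious obstacle; the argument is largely a translation exercise using Remark 1 to move between the $S$-level and the Weight/Step-level. The only mildly delicate point is the inclusion-to-equality upgrade in (a) $\Rightarrow$ (b), which is immediate once the matching cardinalities from Remark 3 are combined with the partition structure and the permutation property of $\pi$.
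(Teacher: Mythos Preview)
Your proof is correct and follows essentially the same approach as the paper: both reduce admissibility to (a) via the Remark~1 identities $\text{I}S_n = -n + 2\text{Weight}$ and $\text{I}S^*_n = -n + 2\text{IStep}$, and both obtain the count in Part~2 by noting that an admissible $\pi$ decomposes into independent bijections $\text{I}A_{n,i} \to \text{I}B_{n,i}$. Your cycle organization (admissibility $\Leftrightarrow$ (c) $\Leftrightarrow$ (a) $\Leftrightarrow$ (b)) differs only cosmetically from the paper's hub-and-spoke (everything equivalent to (a)), and you even address the $i < n$ versus $i \leq n$ issue in (b) that the paper leaves implicit.
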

\begin{proof}  For 1., it is clear that (b) and (c) are each equivalent to (a),
so we argue that the admissibility of $\pi$ is equivalent to (a).  Let $\pi$
be any permutation of $\left\{ 0, \ldots , 2^n - 1\right\}$, let $k < 2^n$ and 
let $x \in D_{n,k},\ y \in D_{n, \pi (k)}$.  Then:
\[
2\text{IStep}(n,k) =  n + S^*_n(x)\text{  and  }
2\text{Weight}(\pi (k)) = n + S_n(y).  
\]
But the condition that the right hand sides of the last two displayed
equations are equal (for any $k$ and  any such $x, y$) defines the admissibility of $\pi$,
while (a) is the condition that the left hand sides are equal (for all $k$), and so the admissibility
of $\pi$ is equivalent to (a). 

For 2., note that an admissible permutation $\pi$ 
decomposes into the system of its restrictions to the $\text{I}A_{n,i}$.
Complete information 
about $\pi\upharpoonright \text{I}A_{n,i}$ is encoded by
the permutation, $\overline{\pi}_{n,i}$ of
$\left\{ 1, \ldots , \binom{n}{i}\right\}$ defined by:
\begin{equation}
\text{if } 1 \leq s \leq \binom{n}{i},\text{ then }
\overline{\pi}_{n,i}(s) :=\beta \left ( n,i,\pi \left ( a_{n,i,s}\right )\right ).
\end{equation}
Further, the $\overline{\pi}_{n,i}$
are arbitrary in the sense that if, for $i < n,\
\sigma_{n,i}$ is {\it any } permutation of
$\left\{ 1, \ldots , \binom{n}{i}\right\}$, then for each $n$
there is a (unique) admissible permutation $\pi_n$
of $\left\{ 0, \ldots , 2^n - 1\right\}$ such
that for each $i < n,\ \overline{\pi}_{n,i} = \sigma_{n,i}$.  Finally,
for fixed $n$, the product in 2. counts the number of such
systems $\left ( \sigma_{n,i}\vert i < n\right )$,
and so 2. follows.
\end{proof}
It is worth pointing out, here, that the proof of 2. of Lemma 1 provides
us with yet another view of admissible permutations, since they
correspond canonically to such systems of $\sigma_{n,i}$.
We will return to this view of admissible permutations in (3.5) below.
\subsection{Corollary 2 and Propositions 1, 2}
\label{subsec:3.2}
The next Corollary is an immediate consequence of Theorem 1, Corollary 1 and
2. of Lemma 1; it gives the existence of trim, strong triangular array for $\left\{ S^*_n\right\}$.
\begin{corollary}  For each $n$,
there are 
$\prod_{i=0}^n \left ( \binom{n}{i}!\right )$ representations 
\[
S^*_n =  \sum_{i=1}^nR^*_{n,i},
\]
where $\left ( R^*_{n,i}\vert\, 1 \leq i \leq n\right )$ is an
independent family of Rademacher random variables each of which 
has mean 0, variance 1 and depends 
only on $\mathbf{r}_n.$   Therefore, there exist (continuum many)
trim, strong triangular array representations of the sequence
$\left\{ S^*_n\right\}$.
\qed
\end{corollary}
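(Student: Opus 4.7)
The proof is essentially a chain of citations rather than a new argument, since all the substantive content has already been established. The plan is to assemble Theorem~1, Corollary~1 and part~2 of Lemma~1 into the two conclusions, and to supply the routine cardinality count for the ``continuum many'' clause.

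First I would treat the count for fixed $n$. By Theorem~1, for each $n>0$ the map $\pi \mapsto \left( R^*_{n,i}\,\vert\, 1\leq i\leq n\right)$ defined by Equation~(2) is a bijection between admissible permutations of $\{0,\ldots,2^n-1\}$ and i.i.d.\ families of mean 0, variance 1 Rademacher random variables depending only on $\mathbf{r}_n$ whose pointwise sum equals $S^*_n$. Each such family is exactly a representation of the form displayed in the statement. Hence the number of such representations equals the number of admissible permutations of $\{0,\ldots,2^n-1\}$, which by part~2 of Lemma~1 is $\prod_{i=0}^n \binom{n}{i}!$.

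For the second assertion, I would apply Corollary~1 to pass from the fixed-$n$ count to sequences. The canonical bijection there sends sequences $\{\pi_n\}$ of admissible permutations to trim, strong triangular array representations of $\{S^*_n\}$. It therefore suffices to show that the set of such sequences has cardinality $2^{\aleph_0}$. For this, note that for every $n\geq 2$ we have $\binom{n}{\lfloor n/2\rfloor}\geq 2$, so at least one of the factors $\binom{n}{i}!$ in the product from Lemma~1 is $\geq 2$, giving at least two admissible permutations of $\{0,\ldots,2^n-1\}$. Independent choice at each level $n$ then yields at least $2^{\aleph_0}$ sequences, while the total number of such sequences is bounded above by $\aleph_0^{\aleph_0}=2^{\aleph_0}$, so equality holds.

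There is no real obstacle: the work of the proof was done in Theorem~1, Corollary~1 and Lemma~1, and the only thing to verify here is that the bijections compose correctly and that the cardinality estimate is the trivial one above. The only mild subtlety is making sure that the transported objects on the representation side are genuinely counted as distinct, which is immediate because the bijection of Theorem~1 is canonical and one-to-one; distinct permutations produce distinct families via Equation~(2).
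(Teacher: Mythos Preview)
Your proposal is correct and follows exactly the same route as the paper: the corollary is stated as an immediate consequence of Theorem~1, Corollary~1 and part~2 of Lemma~1, with the ``continuum many'' clause left implicit. Your extra paragraph justifying the cardinality $2^{\aleph_0}$ simply spells out a detail the paper omits.
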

The next Proposition builds on Corollary 2 to show that there are non-trim, strong 
triangular array representations of $\left\{ S^*_n\right\}$, by constructing one,
as a modification of a trim, strong one.  Thus, the existence of
trim strong triangular arrays for $\left\{S^*_n\right\}$ is not an immediate formal consequence 
of the existence of strong ones.
\begin{proposition}  There are non-trim, strong triangular arrays for $\left\{S^*_n\right\}$.
\end{proposition}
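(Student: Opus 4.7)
The plan is to exhibit a non-trim strong triangular array as a one-row modification of a trim one.  By Corollary 2, pick any trim strong triangular array $(R^*_{m,i})$ representing $\{S^*_n\}$, fix some $n\geq 2$, and choose two distinct indices $i,j\in\{1,\dots,n\}$.  All rows other than row $n$ will be left untouched; only the $i$-th and $j$-th entries of row $n$ will be modified, and they will be forced to depend on $\varepsilon_{n+1}$ via a conditional swap.

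Concretely, I will set $R'_{n,\ell}:=R^*_{n,\ell}$ for $\ell\neq i,j$, and
\[
(R'_{n,i},R'_{n,j})(x) := \begin{cases} (R^*_{n,i}(x),R^*_{n,j}(x)) & \text{if } \varepsilon_{n+1}(x)=0, \\ (R^*_{n,j}(x),R^*_{n,i}(x)) & \text{if } \varepsilon_{n+1}(x)=1. \end{cases}
\]
The sum of row $n$ is unchanged pointwise, since on $\{\varepsilon_{n+1}=1\}$ I am only permuting two of its summands, so $\sum_{\ell=1}^n R'_{n,\ell}=S^*_n$ still holds, and all other rows already have the required sum by hypothesis.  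What remains to check is (i) each $R'_{n,\ell}$ is a mean-$0$, variance-$1$ Rademacher, and (ii) the family $(R'_{n,1},\dots,R'_{n,n})$ is i.i.d.  Both reduce to one observation: $\varepsilon_{n+1}$ is independent of $\mathbf{r}_n$ and hence of the trim family $(R^*_{n,1},\dots,R^*_{n,n})$.  Conditioning on $\varepsilon_{n+1}$ and using that the original row is i.i.d.\ uniform on $\{-1,1\}^n$, either coordinate configuration produced by the swap has the same uniform joint distribution on $\{-1,1\}^n$, so the unconditional joint law of $(R'_{n,1},\dots,R'_{n,n})$ is uniform as well.

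The main (and essentially only) substantive step is this independence verification; the rest is either tautological or follows from the choice to leave other rows alone.  Non-trimness of the resulting array is then witnessed by row $n$, since $R'_{n,i}$ genuinely depends on $\varepsilon_{n+1}$ on the set $\{R^*_{n,i}\neq R^*_{n,j}\}$, which has measure exactly $1/2$ by the independence and symmetry of $R^*_{n,i}$ and $R^*_{n,j}$ in the original trim representation.  Hence the modified array is a strong, non-trim triangular array representation of $\{S^*_n\}$, as required.
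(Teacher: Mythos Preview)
Your proof is correct and follows the same core idea as the paper: start from a trim strong array, fix one row $n$, and make two of its entries depend on $\varepsilon_{n+1}$ via a swap that leaves the row sum pointwise unchanged. The paper carries this out more locally, first hunting for two specific level-$n$ dyadic intervals $D_{n,k_1},D_{n,k_2}$ on which $(R^*_{n,i_1},R^*_{n,i_2})$ take the values $(-1,1)$ and $(1,-1)$, and then performing an explicit swap on their level-$(n+1)$ halves; independence is then checked by observing that both the sum and the product $R^*_{n,i_1}R^*_{n,i_2}$ are preserved on those intervals. Your global conditional swap on $\{\varepsilon_{n+1}=1\}$ avoids that search and yields a cleaner independence argument: since $\varepsilon_{n+1}$ is independent of the trim row and the uniform law on $\{-1,1\}^n$ is invariant under coordinate permutations, the modified row is again uniform. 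The non-trimness witness is also slightly more direct in your version, since $\{R^*_{n,i}\neq R^*_{n,j}\}$ has measure $1/2$ without any preliminary choice of intervals.
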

\begin{proof}
Fix a trim, strong triangular array, $\left\{R^*_{n,i}\right\}$, for 
$\left\{S^*_n\right\}$.  It is easy to see that 
for all sufficiently large $n$
we can find $i_1,\ i_2,\ k_1,\ k_2$
such that $i_1 \neq i_2,\  i_1,\ i_2 < n,\ k_1 < k_2 < 2^n$ and for
$j = 1, 2$:
\[
R^*_{n,i_1}(x) = (-1)^j\text{ for all }x \in D_{n,k_j}\text{ and }
R^*_{n,i_2}(x) = (-1)^{j-1}\text{ for all }x \in D_{n,k_j}.
\] 
Fixing a sufficiently large $n^*$ and then fixing such $i_1,i_2,j_1,j_2$, we define  $\left\{ R^{**}_{n,i}\right\}$
and then verify that it is a non-trim, strong
triangular array for $\left\{S^*_n\right\}$.  For $n \neq n^*$, or
$n = n^*$ and $i \neq i_1, i_2$, we let 
$R^{**}_{n,i} := R^*_{n,i}$.  For $j = 1,2$, we let:
\[
R^{**}_{n^*,i_1}(x) := (-1)^j\text{ for all }x \in D_{n^*+1,2k_1+j-1} 
\cup D_{n^*+1,2k_2+2-j}\text{ and }
\]
\[ 
R^{**}_{n^*,i_1}(x) := (-1)^{j-1}\text{ for all }x \in D_{n^*+1,2k_1+2-j} 
\cup D_{n^*+1,2k_2+j-1},
\]
\[
R^{**}_{n^*,i_2}(x) := (-1)^j\text{ for all }x \in D_{n^*+1,2k_1+2-j} 
\cup D_{n^*+1,2k_2+j-1}\text{ and } 
\]
\[
R^{**}_{n^*,i_2}(x) := (-1)^{j-1}\text{ for all }x \in D_{n^*+1,2k_1+j-1} 
\cup D_{n^*+1,2k_2+2-j}.
\]
Clearly $R^{**}_{n^*,i_1},\ R^{**}_{n^*,i_2}$ are 
Rademacher, with mean 0 and variance 1 and clearly they depend on $\varepsilon_{n^*+1}$.
By construction, we have guaranteed that for all $x \in D_{n^*,k_1}
\cup D_{n^*,k_2}$ we will have that
\[
R^{**}_{n^*,i_1}(x) + R^{**}_{n^*,i_2}(x) = R^*_{n^*,i_1)}(x) + R^*_{n^*,i_2}(x)\text{ and }
\]
\[
R^{**}_{n^*,i_1}(x) \cdot  R^{**}_{n^*,i_2}(x) = R^*_{n^*,i_1}(x) \cdot R^*_{n^*,i_2}(x).
\]
This suffices to show that the $R^{**}_{n^*,i}$ are independent, and that they
sum to $S^*_{n^*}$.  Thus, $\left\{ R^{**}\right\}$
is a strong, non-trim
triangular array for $\left\{S^*_n\right\}$, as required.
\end{proof}
The next Proposition is a ``non-persistence'' result,
showing that in any sequence, $\left\{\pi_n\right\}$, of
admissible permutations, there is no $n$ such that $\pi_{n+1}$
extends $\pi_n$, and that, in any trim, strong triangular
array representation, $\left\{R^*_{n,i}\right\}$, of $\left\{S^*_n\right\}$,
for any $n$, there is some $i$ such that $R^*_{n+1,i} \neq R^*_{n,i}$.
This is in contrast to the situation
for the $R_{n,i}$, so, as noted at the end of (1.2.2),
Proposition 2 begins to answer to Question 4.  Item 1. of
Proposition 2 appeals to 1. of Lemma 1.
\begin{proposition} 

\text{ }

\begin{enumerate}
\item  If $\left \{ \pi_n\right\}$ is any sequence of admissible permutations,
then for all $n,\ \pi_n \not\subseteq \pi_{n+1}$ .

\item  If $\left\{  R^*_{n,i}\right \}$  is a trim,
strong triangular array for 
$\left\{ S^*_n\right\}$, then for all $n$, there is $1 \leq i \leq n$ such
that $R^*_{n+1,i} \neq R^*_{n,i}$.
\end{enumerate}
\end{proposition}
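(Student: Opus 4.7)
The plan is to derive Item 1 directly from condition (a) of Lemma 1(1) by exhibiting a concrete $k < 2^n$ where $\text{IStep}(n,\cdot)$ and $\text{IStep}(n+1,\cdot)$ disagree, and then to convert the supposed persistence in Item 2 into a structural constraint on $\pi_{n+1}$ that runs afoul of the same arithmetical obstruction via a pigeonhole count.

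For Item 1, I would suppose toward contradiction that $\pi_n \subseteq \pi_{n+1}$, meaning $\pi_{n+1}(k) = \pi_n(k)$ for every $k < 2^n$. Applying condition (a) of Lemma 1(1) to both $\pi_n$ and $\pi_{n+1}$ and restricting to $k < 2^n$ yields
\[
\text{IStep}(n,k) = \text{Weight}(\pi_n(k)) = \text{Weight}(\pi_{n+1}(k)) = \text{IStep}(n+1,k).
\]
It therefore suffices to produce a single witness $k < 2^n$ where the two IStep values differ. The natural choice is $k = n+1$: by Definition 1, $\text{SBC}(n,2) = n+1$ while $\text{SBC}(n+1,2) = n+2$, so Remark 2(1) gives $\text{IStep}(n, n+1) = 2$ (since $\text{SBC}(n,2) \leq n+1 < \text{SBC}(n,3)$) and $\text{IStep}(n+1, n+1) = 1$ (since $1 \leq n+1 < n+2$), contradicting the displayed equality.

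For Item 2, I would pass through the bijection of Corollary 1 to the sequence $\{\pi_n\}$ of admissible permutations associated with the triangular array $\{R^*_{n,i}\}$, and assume toward contradiction that $R^*_{n+1,i} = R^*_{n,i}$ for every $1 \leq i \leq n$. Fix $k < 2^n$ and pick $x \in D_{n,k}$; Equation (3) gives $\text{I}\mathbf{s}_n(\pi_n(k)) = (R^*_{n,i}(x))_{i=1}^n$. For $k' \in \{2k, 2k+1\}$ we have $D_{n+1,k'} \subseteq D_{n,k}$, and the level $n+1$ version of Equation (3), combined with the persistence hypothesis, forces the first $n$ coordinates of $\text{I}\mathbf{s}_{n+1}(\pi_{n+1}(k'))$ to coincide with those of $\text{I}\mathbf{s}_n(\pi_n(k))$. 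Since those coordinates encode the $n$ most significant bits of $\pi_{n+1}(k')$, this yields $\pi_{n+1}(k') \in \{2\pi_n(k), 2\pi_n(k)+1\}$, and bijectivity of $\pi_{n+1}$ upgrades this to the set equality $\{\pi_{n+1}(2k), \pi_{n+1}(2k+1)\} = \{2\pi_n(k), 2\pi_n(k)+1\}$. Passing to Weights via Lemma 1(1)(a) then produces
\[
\{\text{IStep}(n+1, 2k), \text{IStep}(n+1, 2k+1)\} = \{\text{IStep}(n,k), \text{IStep}(n,k)+1\}.
\]
In particular, $\text{IStep}(n+1,\cdot)$ must strictly jump between positions $2k$ and $2k+1$ for every $k < 2^n$, so each odd integer below $2^{n+1}$ would have to equal $\text{SBC}(n+1, j)$ for some $j \in \{1, \ldots, n+1\}$. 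But there are $2^n$ such odd integers and only $n+1$ positive-index SBC values at level $n+1$, so $2^n > n+1$ delivers the contradiction.

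The main obstacle is the translation step in Item 2: unwinding the $\mathbf{s}$-encoding to convert the persistence hypothesis on the $R^*_{n,i}$ into the binary-digit statement $\pi_{n+1}(k') \in \{2\pi_n(k), 2\pi_n(k)+1\}$. Once that is in hand, the remainder of each item is a clean application of Lemma 1(1)(a) followed by elementary counting. The arguments formally require $n \geq 2$ so that $n+1 < 2^n$ and $2^n > n+1$; this is the range in which the non-persistence phenomenon being asserted is genuinely present.
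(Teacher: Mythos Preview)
Your proof is correct. For Item~1 you follow the paper's approach, though you supply the explicit witness $k=n+1$ whereas the paper merely asserts that some $k<2^n$ with $\text{IStep}(n+1,k)<\text{IStep}(n,k)$ exists. For Item~2, both you and the paper first establish the structural fact $\{\pi_{n+1}(2k),\pi_{n+1}(2k+1)\}=\{2\pi_n(k),2\pi_n(k)+1\}$ from the persistence hypothesis, but the contradictions are extracted differently. The paper notes that the two images then have opposite parity and chooses a single $k>0$ with $\text{IStep}(n+1,2k)=\text{IStep}(n+1,2k+1)=1$ and both images exceeding $1$; since weight-$1$ integers greater than $1$ are even powers of $2$, this contradicts opposite parity. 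You instead pass to weights to obtain $\{\text{IStep}(n+1,2k),\text{IStep}(n+1,2k+1)\}=\{\text{IStep}(n,k),\text{IStep}(n,k)+1\}$, forcing a jump of $\text{IStep}(n+1,\cdot)$ at every odd position and hence forcing all $2^n$ odd integers below $2^{n+1}$ to be $\text{SBC}(n+1,\cdot)$ values, impossible by pigeonhole once $2^n>n+1$. The paper's finish is local and slightly shorter; yours is global and avoids having to pin down a $k$ with several simultaneous properties. Both arguments, as you correctly observe, require $n\geq 2$; this restriction is implicit in the paper's proof as well.
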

\begin{proof}
For item 1., 
the most obvious obstacle to having $\pi_n \subseteq \pi_{n+1}$
is that there will be $k < 2^n$ such that 
$\text{IStep}(n+1, k) < \text{IStep}(n, k)$.
Since $\text{Weight}\left ( \pi_i(k)\right ) = \text{IStep}(i, k)$, for
$i = n,\ n+1$, clearly $\text{Weight}\left (\pi_{n+1}(k)\right ) < 
\text{Weight}\left ( \pi_n(k )\right )$, and therefore
$\pi_{n+1}(k) \neq \pi_n(k)$, for any such $k$.  

For item 2., note first that 
\[
\text{for any } k < 2^n,\ D_{n,k} = D_{n+1,2k} \cup D_{n+1,2k+1}.
\]
Now, let $\left\{ R^*_{n,i}\right\}$ be a trim, strong triangular array for
$\left\{S^*_n\right\}$, and $\left\{ \pi_n\right\}$ be the associated
sequence of admissible permutations.  Fix $n$, and, towards a contradiction,
assume that $R^*_{n+1,i} = R^*_{n,i}$ for all $1 \leq i \leq n$.  
We first show that 
\[
\text{for all } k < 2^n \text{ and for } j = \pi_{n+1}(2k), \pi_{n+1}(2k+1),\ 
D_{n+1,j} \subseteq D_{n,\pi_n(k)}.
\]
We argue this for $j = \pi_{n+1}(2k)$.  The case $j = \pi_{n+1}(2k+1)$ is similar.
Note that
\[
\text{for all } 1 \leq i \leq n,\ y \in D_{n+1,j} \text{ and } x \in D_{n+1,2k},\
(-1)^{1+\varepsilon_i(y)} = R^*_{n+1,i}(x).  
\]
But any such $x$ is in
$D_{n,k}$, and, by hypothesis, $R^*_{n+1,i}(x) = R^*_{n,i}(x)$, so
\[
(-1)^{1+\varepsilon_i(y)} = R^*_{n+1,i}(x) = R^*_{n,i}(x) = (-1)^{1+\varepsilon_i(z)} \text{ for any }
z \in D_{n,\pi_n(k)},
\]
i.e., $\displaystyle{y \in D_{n,\pi_n(k)}}$.  It is then immediate that 
\[
D_{n,\pi_n(k)} = D_{n+1,\pi_{n+1}(2k)} \cup D_{n+1,\pi_{n+1}(2k+1)} \text{ and so: }
\]
\[
\left\{ \pi_{n+1}(2k),\ \pi_{n+1}(2k+1)\right \} = 
\left\{ 2\pi_n(k), 2\pi_n(k)+1\right\},
\]
which means that $\pi_{n+1}(2k),\ \pi_{n+1}(2k+1)$ have
opposite parity.  Now, however, choose $k > 0$ so that 
$\text{IStep}(n+1, 2k) = \text{IStep}(n+1, 2k+1) = 1$
and $\pi_{n+1}(2k),\ \pi_{n+1}(2k+1) > 1$.
Then  $\pi_{n+1}(2k),\ \pi_{n+1}(2k+1)$ both have
weight 1 and therefore, both are even, contradiction!
\end{proof}
\subsection{Sequences of Admissible Permutations and their 
Encodings, Proposition 3}
\label{subsubsec:3.3}
A sequence, $\left\{ \pi_n\right\}$ of permutations of
$\left\{ 0, \ldots , 2^n-1\right \}$ (admissible or not) is naturally encoded by 
the two-place function $\Pi : \mathbb{N}\times \mathbb{N}
\to \mathbb{N}$ defined by $\Pi (n, k) = \pi_n(k)$ for $n > 0$ and
$k < 2^n,\ \Pi (0,0) = 0$ and $\Pi (n, k) = 2^n$,
for $k \geq 2^n$ (including when $n = 0$ and $k > 0$).  
We will approach the question of the complexity of the 
sequence in terms of the complexity of its natural encoding.

The next Proposition gives our general lower complexity
bound statement for arbitrary sequences of admissible 
permutations.  We follow the general approach developed
in the first paragraph of (1.3.5).
\begin{proposition}  The complexity of
$2^n$ is a lower bound for the complexity of {\it any} sequence of admissible permutations, and thus for any trim, strong
triangular array for $\left\{S^*_n\right\}$. 
\end{proposition}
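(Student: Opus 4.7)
The plan is to follow the strategy sketched in (1.3.5): for any sequence $\left\{\pi_n\right\}$ of admissible permutations with natural encoding $\Pi$, we exhibit $2^n$ as an explicit sum $1 + \sum_{k=1}^{n} \Pi(n,k)$ of $n$ values of $\Pi$, each taken at an argument trivially computable from $n$. Such an identity immediately gives a polynomial-time reduction from the function $n \mapsto 2^n$ to $\Pi$, yielding the announced lower bound. The whole proof reduces to choosing the right invocation of condition (b) of 1. of Lemma 1, and the natural choice turns out to be $i = 1$.

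By Definition 1, $\text{SBC}(n,1) = \binom{n}{0} = 1$ and $\text{SBC}(n,2) = 1 + n$, so item 1 of Remark 2 immediately gives $\text{I}A_{n,1} = \left\{ 1, 2, \ldots, n\right\}$. On the other hand $\text{I}B_{n,1}$ is exactly the set of $k < 2^n$ of Hamming weight $1$, hence $\text{I}B_{n,1} = \left\{ 2^j \mid 0 \leq j < n\right\}$, whose members sum to the geometric series $2^n - 1$. Admissibility then forces $\pi_n$ to restrict to a bijection from $\left\{ 1, \ldots, n\right\}$ onto $\left\{ 1, 2, 4, \ldots, 2^{n-1}\right\}$, and summing over this bijection yields
\[
\sum_{k=1}^{n} \pi_n(k) = \sum_{j=0}^{n-1} 2^j = 2^n - 1,
\]
which rearranges to the announced Equation (6),
\[
2^n = 1 + \sum_{k=1}^{n} \Pi(n,k).
\]

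The $n$ summands on the right are distinct (since $\pi_n$ is one-to-one) and each is less than $2^n$; computing them requires $n$ calls to $\Pi$ on inputs trivially obtained from $n$, followed by $n$ additions of $n$-bit integers, for total polynomial overhead in $n$. The corresponding lower bound for trim, strong triangular array representations of $\left\{ S^*_n\right\}$ is then immediate from Corollary 1 together with Equation (3), which in polynomial time recovers $\pi_n(k)$ from any such representation by reading off the $n$ signs $R^*_{n,i}(x)$ at any sample point $x \in D_{n,k}$. The only conceptual step, and hence the main (if modest) obstacle, is spotting that $i = 1$ is the right index: it is the unique one that simultaneously makes $\text{I}A_{n,1}$ a length-$n$ consecutive block starting at $1$ and makes $\text{I}B_{n,1}$ sum to exactly $2^n - 1$, so that a single $+1$ completes the passage to $2^n$; every other ingredient in the argument is routine.
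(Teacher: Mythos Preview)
Your proof is correct and follows essentially the same approach as the paper's: both establish Equation (6), $2^n = 1 + \sum_{k=1}^n \Pi(n,k)$, by observing that $\text{I}A_{n,1} = \{1,\ldots,n\}$, that $\text{I}B_{n,1}$ consists of the powers of $2$ below $2^n$, and that admissibility forces $\pi_n$ to map the former onto the latter. Your write-up is somewhat more detailed (you spell out the geometric series and the polynomial-time overhead, and you make explicit the passage back to triangular arrays via Corollary~1 and Equation~(3)), but the underlying idea is identical.
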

\begin{proof} Let $\left\{\pi_n\right\}$ be any
sequence of admissible permutations and let $\Pi$ be its
natural encoding.  A simple, explicit expression for $2^n$
in terms of the $n$ values, $\Pi (n,1),\ldots , \Pi(n,n)$, of $\Pi$,
uniformly in $n$ is provided by Equation (6), below.  Thus, by paragraph 1
 of (1.3.5) Equation (6) and its proof give the statement
of the proposition.
\begin{equation}
 \text{For all } n > 0,\
2^n = 1 + \sum_{i = 1}^n\Pi (n,i).
\end{equation}
Equation 6 is immediate, from the following observations.  First, $\text{I}A_{n,1} = \{ 1, \ldots , n\}$.
Second, $\text{I}B_{n,1}$ is the set of
powers of $2$ below $2^n$ (since these are the weight 1 positive integers below $2^n$).
Finally, $\pi_n\left [ \text{I}A_{n,1}\right ] = \text{I}B_{n,1}.$   
\end{proof}
\subsection{The sequence $\left\{F_n\right\}$, its natural encoding, $F$, and Theorem 2}
\label{subsec:3.4}
For the next Definition, recall
our invocation of Definition 3 in (3.1, where the 
$b_{n,i,s}$ and $\alpha (n,i,k)$ are defined).
\begin{definition}  For all $n > 0,\ F_n$ is the permutation
of $\left\{ 0, \ldots , 2^n-1\right \}$ defined as follows.
If $0 \leq k < 2^n$, let $i = \text{IStep}(n,k)$.  Then
\[
F_n(k) := b_{n,i,s},\text{ where } 
 s = \alpha (n,i,k).
\]
Then,  take 
$F:  \mathbb{N}\times \mathbb{N}
\to \mathbb{N}$ to be the natural encoding of the sequence
$\left\{ F_n\right\}$.  Also, for use in (3.5) and referring to the first sentence
of (3.3), we take $\text{Inv}F:  \mathbb{N}\times \mathbb{N}
\to \mathbb{N}$ to be the natural encoding of the sequence
$\left\{ F^{-1}_n\right\}$.
\end{definition}
\noindent
In view of 2. of Lemma 1, these $F_n$ are (obviously)
very natural admissible permutations of the $\left\{ 0, \ldots , 2^n-1\right \}$.
Recalling that, $\text{ERIB}$ is the enumerating function for $\text{RI}B$, viz. 
the invocation of Definition 3 immediately preceding Remark 3,
note that $F(n,k) = \text{ERI}B(n,i,s)$, where $s$ is as in Definition 4. 
\begin{remark}  Note that our definition of $F_n$ is equivalent
to stipulating that, in terms of the notation used in Equation (5),
for all $i \leq n,\ \overline{F_n}_i$ is the identity permutation of
$\left\{ 1, \ldots , \binom{n}{i}\right\}$.
Note, also, that with $i = \text{\textnormal{IStep}}(n,k)$ and $s =
\alpha(n,i,k)$, then, in fact, $s = k - \text{\textnormal{SBC}}(n,i)$; further,
for these $i, s$, we also have that $s = \beta (n,i,F(n,k))$.
\qed
\end{remark} 
\begin{proposition}   As functions of $(n,i)$, with $i \leq n$, the
binomial coefficients $\binom{n}{i}$ and \textnormal{SBC} are 
computable in time polynomial in $n$.  The function $\text{\textnormal{IStep}}$ is \textnormal{P-TIME}.  Further,
the relation $\text{\textnormal{RI}A}$ is \textnormal{tame}.
\end{proposition}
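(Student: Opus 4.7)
My plan is to build the binomial coefficients $\binom{n}{i}$ by running the Pascal recursion, derive $\text{SBC}(n,i)$ from them by partial summation, and then read off $\text{IStep}$ and the cardinality function $\alpha = \text{cd}_{\text{RI}A}$ by direct lookup.

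First, to compute the binomial coefficients, I would apply the Pascal recursion $\binom{n'}{i'} = \binom{n'-1}{i'-1} + \binom{n'-1}{i'}$ with boundary values $\binom{n'}{0} = \binom{n'}{n'} = 1$ to fill in all entries of Pascal's triangle through row $n$. The cost is $O(n^2)$ additions. The uniform bit-size bound comes from the elementary identity $\sum_{j=0}^{n'}\binom{n'}{j} = 2^{n'}$, which, together with $n' \le n$, forces every entry in use, and in particular every intermediate sum, to lie strictly below $2^n$ (for $n \ge 1$). Each addition is therefore between non-negative integers of at most $n$ bits producing a sum still below $2^n$, so the total running time is polynomial in $n$.

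Having the row $\binom{n}{0}, \ldots, \binom{n}{n}$ on hand, I would obtain $\text{SBC}(n,i) = \sum_{j=0}^{i-1}\binom{n}{j}$ by accumulating the partial sums left to right. This is at most $n$ further additions, and since $\text{SBC}(n,i) \le \text{SBC}(n,n) = 2^n - 1 < 2^n$ for $i \le n$, all intermediate sums likewise stay below $2^n$. For $\text{IStep}(n,k)$ with $k < 2^n$, I would then use item 3 of Remark 2, which identifies $\text{IStep}(n,k)$ as the unique $i$ with $\text{SBC}(n,i) \le k < \text{SBC}(n,i+1)$, and perform a linear scan through the tabulated values $\text{SBC}(n,0), \ldots, \text{SBC}(n,n+1)$. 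This exhibits $\text{IStep}$ as P-TIME.

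For the tameness of $\text{RI}A$, by Definition 3 I only need $\alpha = \text{cd}_{\text{RI}A}$ to be P-TIME. Item 2 of Remark 3 furnishes an explicit three-case closed form for $\alpha(n,i,x)$: zero if $x < \text{SBC}(n,i)$, equal to $\binom{n}{i}$ if $x \ge \text{SBC}(n,i+1)$, and equal to $x + 1 - \text{SBC}(n,i)$ otherwise. Each ingredient is P-TIME by the earlier steps, so $\alpha$ is P-TIME and $\text{RI}A$ is tame. No step poses a genuine obstacle; the only point requiring care is the uniform bit-size bookkeeping on the Pascal recursion, and that is settled by the row-sum identity above.
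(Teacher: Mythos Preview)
Your argument is correct and follows essentially the same route as the paper: Pascal's triangle for the binomial coefficients (with the $<2^n$ bound), partial sums for $\text{SBC}$, a bounded linear search for $\text{IStep}$, and the three-case closed form for $\alpha$ to get tameness of $\text{RI}A$. The paper cites item 1 of Remark 2 for $\text{IStep}$ rather than item 3, but the characterizations are equivalent, and you supply a bit more detail on the bit-size bookkeeping than the paper's ``obviously.''
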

\begin{proof}
The algorithm for computing the binomial coefficents is simply
to generate the needed portion of Pascal's triangle
using the familiar addition identity.  This requires $O\left ( n^2\right )$
additions, and then the computation of $\text{SBC}(n,i)$, requires
$i$ more additions.  All of the summands remain below $2^n$, obviously.
For additional results on the computation of the binomial coefficients, see 
 {[}\ref{Beame}{]},  {[}\ref{Hesse}{]}.   That $\text{IStep}$ is P-TIME 
then follows immediately from item 1. of Remark 2; note that the ``search''
is bounded by $n$.
For the final statement, recall (item 2. of Remark 3) that the function $\alpha$
of Definition is $\text{cd}_{\text{RI}A}$, and let  
$x \in \mathbb{Z}^+$.  Note that if 
$x < \text{SBC}(n,i)$, then $\alpha (n,i,x) = 0$,
and if $x \geq \text{SBC}(n,i+1)-1$, then $\alpha (n,i,x) = \binom{n}{i}$.
For $\text{SBC}(n,i) \leq x < \text{SBC}(n,i+1)-1,\ \alpha (n,i,x) = 1+ x -
\text{SBC}(n,i)$, by Remark 4.  Thus, $\alpha$ is P-TIME.
\end{proof}
The next Lemma embodies, in abstract form (to facilitate multiple applications),
S. Buss's suggestion of combining tameness with a 
binary search argument to show that if $\text{R}X$ is tame
then $\text{ER}X$ is P-TIME.  Along with Item 6. of Remark 3, 
Lemma 2 represents the main apport of
the hypothesis of tameness.
\begin{lemma}  Suppose that $U_n$, the $X_{n,u}$, etc., are as in
Definition 3 and suppose that $\text{\textnormal{R}}X$ is \textnormal{tame}.  Then the enumerating function
$\text{\textnormal{ER}}X$ is \textnormal{P-TIME}. 
\end{lemma}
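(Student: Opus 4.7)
The approach I would take is Buss's suggested combination of tameness with binary search. Since tameness asserts that $\text{cd}_{\text{R}X}$ is P-TIME, and since for fixed $n$ and $u \in U_n$ the function $x \mapsto \text{cd}_{\text{R}X}(n,u,x)$ is non-decreasing on $\{1, \ldots, 2^n - 1\}$ with steps of size at most $1$ --- jumping by exactly $1$ precisely at members of $X_{n,u} \cap \{1, \ldots, 2^n - 1\}$ and remaining constant elsewhere (essentially item 6 of Remark 3) --- locating a member of $X_{n,u}$ of prescribed rank reduces to finding the leftmost threshold crossing of a monotone P-TIME function on an interval of length $2^n$. This is exactly the situation in which binary search succeeds in $O(n)$ evaluations.

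Concretely, in the principal case where $0 \notin X_{n,u}$, the element $x_{n,u,s} = \text{ER}X(n,u,s)$ is characterized as the least $x \in \{1, \ldots, 2^n - 1\}$ satisfying $\text{cd}_{\text{R}X}(n,u,x) \geq s$. I would find it by a standard binary search: initialize $L := 1,\ U := 2^n - 1$; while $L < U$, set $M := \lfloor (L+U)/2 \rfloor$ and replace $U$ by $M$ when $\text{cd}_{\text{R}X}(n,u,M) \geq s$, and replace $L$ by $M+1$ otherwise. The loop terminates in at most $n$ iterations with $L = U = x_{n,u,s}$, each iteration costing a single P-TIME evaluation of $\text{cd}_{\text{R}X}$ plus a constant number of arithmetic operations on integers below $2^n$. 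Summing over the iterations gives a total running time polynomial in $n$.

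The one genuine (if mild) obstacle is the boundary case $0 \in X_{n,u}$: since the domain of $\text{cd}_{\text{R}X}$ is restricted to positive integers, $\text{cd}_{\text{R}X}$ does not register membership of $0$. I would dispatch this by treating $0 \in X_{n,u}$ as a separately decidable condition, which is polynomial time in every concrete invocation of Definition 3 made in this paper (and which in any reasonable abstract formulation can be folded into the tameness hypothesis by extending $\text{cd}_{\text{R}X}(n,u,0) := 0$ and augmenting it with a P-TIME test at the single point $x = 0$). When $0 \in X_{n,u}$ and $s = 1$, return $0$; when $0 \in X_{n,u}$ and $s \geq 2$, apply the binary search above with threshold $s - 1$ in place of $s$, since the first rank has been consumed by $0$. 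With this bookkeeping in place, the binary search yields a polynomial time algorithm for $\text{ER}X$, completing the proof of the lemma.
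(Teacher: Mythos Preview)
Your proof is correct and takes essentially the same approach as the paper: both use binary search over $\{0,\ldots,2^n-1\}$, driven by the P-TIME cardinality function $\text{cd}_{\text{R}X}$, to locate $x_{n,u,s}$ in $O(n)$ evaluations. The paper maintains a residual count $s_i$ and tests $\text{cd}(n,u,m_i)-\text{cd}(n,u,a_i)\geq s_i$, whereas you use the equivalent leftmost-threshold formulation $\text{cd}(n,u,M)\geq s$; you also handle the boundary case $0\in X_{n,u}$ more explicitly than the paper does.
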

\begin{proof}
Fix $n,\ u \in U_n$ and $s$ with $1 \leq s \leq \left\vert X_{n,u}\right\vert$.
Let $\text{cd} = \text{cd}_{\text{R}X}$ be the cardinality function associated
with the $X_{n,u}$; by hypothesis, $\text{cd}$ is 
P-TIME.
Start from $a_0 = 0, b_0 = 2^{n}-1,\ s_0 = s$.  Having
defined $a_i,\ b_i,\ s_i$, we let $m_i := \lfloor \left ( a_i+b_i\right )/2\rfloor$ and we 
consider whether $ \text{cd}\left ( n,u,m_i\right ) - \text{cd}\left ( n,u,a_i\right ) \geq s_i$.
If so, we take $a_{i+1} = a_i,\ b_{i+1} = m_i,\ s_{i+1} = s_i$.
Otherwise, we take $a_{i+1} = m_i,\ b_{i+1} = b_i,\ s_{i+1} = 
\ s_i + \text{cd}_{\text{R}X}\left ( n,u,a_i\right ) - \text{cd}_{\text{R}X}\left ( n,u,m_i\right )$.  
Then, clearly, for some $k \leq n$ we will have $s_k = 1,\ a_k = b_k-1$ and $b_k = x_{n,u,s}$.
Thus, $\text{ER}X$ is P-TIME.
\end{proof}
S. Buss also sketched for us an argument that became the proof of the next Lemma. 
\begin{lemma}
\text{\textnormal{(S. Buss)}}  $\text{\textnormal{RI}}B$ is \textnormal{tame}. 
\end{lemma}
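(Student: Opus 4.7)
The plan is to exhibit $\beta(n,i,x) = \text{cd}_{\text{RI}B}(n,i,x)$ as an explicit sum of at most $n+1$ binomial coefficients indexed by the 1-bits of the binary expansion of $x$, and then invoke Proposition 4 to conclude that $\beta$ is \textnormal{P-TIME}.

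Fix $n > 0,\ 0 \leq i \leq n$ and $1 \leq x < 2^n$. Write $x = \sum_{j=0}^{n-1}x_j 2^j$ with each $x_j \in \{0,1\}$, and for $0 \leq j < n$ set $w_j := \sum_{\ell = j+1}^{n-1}x_\ell$, the weight of the bits of $x$ strictly above position $j$. With the standard convention that $\binom{j}{i - w_j} = 0$ when $i - w_j \notin \{0,\ldots,j\}$, I would first establish the identity
\[
\left\vert\left\{k : 0 \leq k \leq x,\ \text{Weight}(k) = i\right\}\right\vert = [\text{Weight}(x) = i] + \sum_{\substack{0 \leq j < n \\ x_j = 1}}\binom{j}{i - w_j},
\]
where $[\cdot]$ denotes the Iverson bracket. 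The proof is a partition argument: $\{k \in \mathbb{N} : k \leq x\}$ decomposes into $\{x\}$ (contributing the first summand) together with, for each $j$ with $x_j = 1$, the set of $k$ whose highest bit of disagreement with $x$ sits at position $j$. Any such $k$ must satisfy $k_\ell = x_\ell$ for $\ell > j$ and $k_j = 0$, with the $j$ bits of $k$ below position $j$ arbitrary; since the bits above $j$ already contribute weight $w_j$, exactly $\binom{j}{i - w_j}$ of these $k$ have $\text{Weight}(k) = i$. Subtracting $[i=0]$ to exclude $k = 0$ from the count gives the desired formula for $\beta(n,i,x)$.

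For the complexity statement, by Proposition 4 each $\binom{j}{i - w_j}$ is computable in time polynomial in $n$ using additions of integers below $2^n$. The bits $x_j$ and the running weights $w_j$ are read off from the binary representation of $x$ in a single scan, and the full sum contains at most $n+1$ terms with value below $2^n$. Hence $\beta$ is \textnormal{P-TIME}, which is precisely the tameness of $\text{RI}B$.

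The main obstacle is a clean verification of the partition identity, particularly the degenerate cases: when $x = 0$, when $i = 0$ (where the subtracted $[i=0]$ exactly cancels the contribution of $k = 0$), and the interpretation of $\binom{j}{i - w_j}$ when $i < w_j$ or $i > w_j + j$. Once these cases are checked, the \textnormal{P-TIME} bound is an immediate consequence of Proposition 4 and the uniform bound on the number of binomial summands.
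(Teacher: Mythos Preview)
Your proposal is correct and follows essentially the same route as the paper's proof: partition $\{k:0\le k\le x\}$ according to the highest bit at which $k$ first disagrees with $x$, observe that on each piece the weight-$i$ count is a single binomial coefficient, and then invoke Proposition~4 for the P-TIME bound. The only differences are cosmetic---you use $0$-indexed bit positions and handle the edge cases uniformly via the Iverson bracket and the convention $\binom{j}{m}=0$ for $m\notin\{0,\dots,j\}$, whereas the paper uses $1$-indexed positions $i_1>\cdots>i_\ell$ and truncates the sum at $\ell=\min(j,\text{Weight}(b))$ to discard the vanishing terms explicitly.
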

\begin{proof}  We show that the function $\beta$ of
Definition 2 is P-TIME.  This suffices, since by (2) of Remark 3, $\beta$ is $\text{cd}_{\text{RI}B}$.
Let $j \leq n$ and let $b \in \mathbb{N}$ with $b < 2^n$.
Without loss of generality we may assume $0 < j,b$ and $j < n$.
Let $\ell = \text{min}(j,\text{Weight}(b))$,
so $\ell \geq 1$.  Let $i_1 > \ldots > i_\ell$ be the $\ell$ largest
$i\text{'s}$ such that 
the $i^{\text{th}}$ bit in the binary expansion of $b$ is 1. 
Clearly $\ell$ and the $i_s$ are computed in time polynomial in $n$.

If $j > i_1$, then $\beta(n,j,b) = 0$, so assume
that $j \leq i_1$.  If $j = 1$, then clearly $\beta (n,1,b) = i_1$,
so assume that $j > 1$ and so $i_1 > 1$.  
If $x \in \mathbb{N}$ with $x \leq b$, then either $x = b$
or there is unique
$s$ with $1 \leq s \leq \ell$ such that the $s\text{-th}$ bit
in the binary expansion of $x$ is 0 but for all $1 \leq t < s$,
the $t\text{-th}$ bit in the binary expansion of $x$ is 1.
 
Note that if $1 \leq s \leq \ell$, then 
$\binom{i_s-1}{j+1-s}$ counts the number
of such $x < b$ with $\text{Weight}(x) = j$.  Finally, this means that
if $\text{Weight}(b) = j$, then $\beta(n,j,b) = 1+\sum_{s=1}^\ell\binom{i_s-1}{j+1-s}$,
while otherwise,   $\beta(n,j,b) = \sum_{s=1}^\ell\binom{i_s-1}{j+1-s}$.
\end{proof}
We record a few observations related to the proof of Lemma 3 that
will be useful in the proof of Theorem 2.
First, note that, for $i_s = 1$,  the binomial coefficient
$\binom{i_s-1}{j+1-s}$ is just 1; for $i_s > 1$, the coefficients
that occur in the final paragraph
of the proof can be expressed in terms of SBC:  
\begin{equation*}
\binom{i_s-1}{j+1-s} = \textnormal{SBC}\left (i_s-1, j+2-s\right ) - \textnormal{SBC}\left (i_s-1, j+1-s\right ).
\end{equation*}
Thus, the function $\beta$ has a simple expression in terms of SBC.
Next, note that $\chi_{\text{RI}B}(n,j,x) = 1$ iff ($j = x = 1$ or ($x > 1$ and $\beta (n,j,x) = \beta (n,j,x-1)+1$)).
Thus, $\chi_{\text{RI}B}$ also has a simple expression in terms of SBC, since $\beta$ does.   This is similar to the
argument for item 6. of Remark 3.

Since $\text{Weight}$ does not depend on $n$, we can
naturally ``put together'' the different branches, indexed by $n$, of the
$\text{ERI}B$ function into a single enumerating function, $\text{EW}$,
for $\text{Weight}$; this is Definition 5.  The final assertion of
Corollary 3 is an easy consequence of Lemmas 2, 3:  $\text{EW}$ is P-TIME.
This result is used in the proof of Theorem 2 and is also of some interest in its own
right, since the sequence
http://oeis.org, 2010, Sequence A066884, {[}\ref{OEIS}{]}, 
encodes $\text{EW}$; {[}\ref{OEIS}{]} does not indicate
that this sequence is P-TIME and gives no closed
form. 
\begin{definition}
For $j, t \in \mathbb{N}$:
\begin{equation*}
\text{EW}(j,t) := \begin{cases}
                            0, &\text{ if } j = 0\\
                            \text{ the }t^{\text{th}} m\text{ such that Weight}(m) = j & \text{ if } j > 0
                         \end{cases}
\end{equation*}
\end{definition}
\begin{corollary}  The relations $\text{\textnormal{RI}}A,\ 
\text{\textnormal{RI}}B$ are \textnormal{P-TIME decidable}.  
The function $\text{EW}$ is \textnormal{P-TIME}.
The relation between $n, k, m$
expressed by the Equation 7, which follows, is also \textnormal{P-TIME decidable}.
Given $n$ and $k < 2^n$, this equation has a unique solution, $m$,  which, as a function
of $(n,k)$, is also \textnormal{P-TIME}.
\begin{equation}
\beta (n,\text{\textnormal{IStep}}(n,k),m)\cdot
\chi_{\text{RI}B}(n,\text{\textnormal{IStep}}(n,k),m) =
\alpha (n,\text{\textnormal{IStep}}(n,k),k).
\end{equation}
\end{corollary}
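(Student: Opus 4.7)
The plan is to deduce Corollary 3 by composing results already in place, specifically Proposition 4, Lemmas 2 and 3, and Item 6 of Remark 3, together with the formulas recorded just after the proof of Lemma 3. There are four assertions to handle, and I will take them in turn.

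First I would dispatch the P-TIME decidability of $\text{RI}A$ and $\text{RI}B$. Proposition 4 gives the tameness of $\text{RI}A$ and Lemma 3 gives the tameness of $\text{RI}B$. In each case the side condition required by Item 6 of Remark 3, namely P-TIME decidability of $\{(n,u) : u \in U_n\}$, reduces to checking $u \leq n$, so Item 6 of Remark 3 immediately delivers the P-TIME decidability of both relations. Next, for $\text{EW}$: since $\text{RI}B$ is tame, Lemma 2 gives that $\text{ERI}B$ is P-TIME, and since the predicate $\text{Weight}(m) = j$ does not depend on $n$, one has $\text{EW}(j,t) = \text{ERI}B(n,j,t)$ for any $n$ with $\binom{n}{j} \geq t$; a suitable $n$ is located in polynomial time using the binomial coefficient computations of Proposition 4, so $\text{EW}$ is P-TIME.

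The P-TIME decidability of the relation given by Equation (7) is then a matter of composing P-TIME ingredients: $\text{IStep}$ is P-TIME by Proposition 4, $\alpha$ and $\beta$ are P-TIME by the tameness of $\text{RI}A$ and $\text{RI}B$, and $\chi_{\text{RI}B}$ is P-TIME by the first assertion of the corollary. Multiplication and equality testing are elementary, so the relation is P-TIME decidable. For the uniqueness and P-TIME computability of the solution $m$, I would set $i := \text{IStep}(n,k)$ and $s := \alpha(n,i,k)$. Since $k \in \text{I}A_{n,i}$, one has $s \geq 1$; hence any solution $m$ of Equation (7) must satisfy $\chi_{\text{RI}B}(n,i,m) = 1$, for otherwise the left-hand side would vanish, contradicting $s \geq 1$. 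Thus $m \in \text{I}B_{n,i}$ and $\beta(n,i,m) = s$, and since $\beta(n,i,\cdot)$ is strictly increasing on $\text{I}B_{n,i}$, the unique solution is $m = b_{n,i,s} = \text{ERI}B(n,i,s)$, which is P-TIME by Lemma 2.

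The only genuine subtlety is the transition from $\text{ERI}B$, which carries $n$ as an explicit argument, to $\text{EW}$, which does not; this is also the only place where the fact that $\text{Weight}$ is $n$-independent is used in an essential way. The remaining parts are direct compositions of already established P-TIME results, with the key conceptual point being that once $m$ is forced into $\text{I}B_{n,i}$ by the nonvanishing of the right-hand side, the strict monotonicity of $\beta(n,i,\cdot)$ on $\text{I}B_{n,i}$ forces uniqueness.
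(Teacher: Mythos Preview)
Your proof is correct and follows essentially the same route as the paper's: tameness of $\text{RI}A$ and $\text{RI}B$ via Proposition 4 and Lemma 3, then Item 6 of Remark 3 for decidability; $\text{EW}$ via $\text{ERI}B$ and Lemma 2; decidability of Equation (7) by composition; and uniqueness by forcing $m\in\text{I}B_{n,i}$ from $\alpha(n,i,k)>0$. The only cosmetic differences are that the paper gives the explicit choice $n=\max(j,t)+1$ for the $\text{EW}$ reduction rather than a search, and writes the unique solution as $\text{EW}(j,\,k+1-\text{SBC}(n,j))$ rather than $\text{ERI}B(n,i,s)$, which is the same value.
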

\begin{proof}  By Proposition 4, $\text{RI}A$ is tame, and by Lemma 3, so is
$\text{RI}B$.  Since the hypothesis of item 6. of Remark 3 clearly holds
for $\text{RI}A,\ \text{RI}B$, these relations are P-TIME decidable. 

For $\text{EW}$, note that if $j > 0$, then for any $t$, taking $n = \text{max}(j,t)+1$,
we will have $t \leq \binom{n}{j}$.  Therefore, 
$\text{EW}(j,t) < 2^n$, and so $\text{EW}(j,t) = \text{ERI}B(n,j,t)$. 
Since $\text{RI}B$ is P-TIME
decidable, $\chi_{\text{RI}B}$ is P-TIME, and so  
the third sentence of the Corollary is immediate from Lemma 3 and
the first sentence.  

Let $j = \text{IStep}(n,k)$ and 
note that $\alpha(n,j,k) > 0$.  This is the point
of multiplying by $\chi_{\text{RI}B}(n,j,m)$: 
to ensure that $m \in \text{I}B_{n,j}$.
The unique solution $m$ is computed
as $\text{EW}(j,k+1-\text{SBC}(n,j))$. 
\end{proof}
The observations in the last paragraph of the proof of Corollary 3
will be used in the proof of Theorem 2.
Of course the P-TIME decidability of $\text{RI}A$ can be established
quite simply and directly from Proposition 4, but the approach taken is more efficient.
\begin{theorem}  $F$ is \textnormal{P-TIME and simply computed in terms of SBC}.
\end{theorem}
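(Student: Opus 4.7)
The plan is to identify $F(n,k)$ as the unique solution $m$ of Equation (7), and then to invoke Corollary 3 and Proposition 4 to conclude both the P-TIME and the SBC assertions. This is exactly the strategy foreshadowed in subsubsection (1.3.5) and in the commentary after Corollary 3.

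First, I would verify that with $i = \text{IStep}(n,k)$ and $s = \alpha(n,i,k)$, the value $m = F(n,k) = b_{n,i,s}$ satisfies Equation (7). By Definition 4, $s = \alpha(n,i,k)$, and by Remark 4 this also equals $k - \text{SBC}(n,i)$. By item 4 of Remark 3, $b_{n,i,s}$ is the unique element $m$ of $\text{I}B_{n,i}$ with $\beta(n,i,m) = s$. Thus $\chi_{\text{RI}B}(n,i,m) = 1$ and $\beta(n,i,m) = s = \alpha(n,i,k)$, so Equation (7) holds at $m = F(n,k)$. By Corollary 3 this solution is unique and admits the explicit evaluation
\[
F(n,k) \;=\; \text{EW}\bigl(\text{IStep}(n,k),\; k+1-\text{SBC}(n,\text{IStep}(n,k))\bigr).
\]

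For the P-TIME assertion, I would simply assemble the existing complexity bounds. Proposition 4 supplies that SBC and $\text{IStep}$ are P-TIME, and Corollary 3 supplies that $\text{EW}$ is P-TIME; all intermediate values lie below $2^n$, and P-TIME is closed under the displayed composition. For the assertion that $F$ is simply computed in terms of SBC, I would unravel the three ingredients. The function $\text{IStep}$ is, by item 1 of Remark 2, obtained by a bounded search through values of SBC. The function $\beta = \text{cd}_{\text{RI}B}$ has the explicit expression as a sum of at most $n$ binomial coefficients given in the proof of Lemma 3, and each such coefficient is a difference of two SBC values by the observation immediately following Lemma 3. The function $\text{EW}$ is then extracted from $\beta$ by the binary search algorithm of Lemma 2. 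Composing these yields an expression for $F(n,k)$ built only from additions, subtractions, comparisons, bit extractions from $k$, and evaluations of SBC.

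I do not expect any single step of this plan to be a real obstacle; the substance of the theorem has been front-loaded into Proposition 4, Lemmas 2 and 3, and Corollary 3. The only point requiring care is the bookkeeping that identifies the definition of $F_n(k)$ as $b_{n,i,s}$ in Definition 4 with the characterization of $F_n(k)$ via Equation (7); this rests on item 4 of Remark 3 together with the computation $s = k - \text{SBC}(n,i)$ from Remark 4. Once that identification is pinned down, the remainder of the argument is a clean composition of previously established P-TIME facts.
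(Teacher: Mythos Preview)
Your proposal is correct and follows essentially the same route as the paper: characterize $F(n,k)$ as the unique solution of Equation (7), then read off both the P-TIME claim and the ``simply computed in terms of SBC'' claim from Proposition 4, Lemmas 2 and 3, and Corollary 3. The only noteworthy difference is that the paper also records, as a side observation within the proof, the improved lower-bound expression $2^n = F(n,n) + F(n,n)$ (Equation (8)), sharpening Proposition 3 for this particular sequence; this is not part of the theorem statement, so its omission from your argument is harmless. One small bookkeeping point: you quote Remark 4 as giving $s = k - \text{SBC}(n,i)$ but then (correctly) write the EW argument as $k+1-\text{SBC}(n,i)$; the latter matches item 2 of Remark 3 and the last line of the proof of Corollary 3, so just be consistent when you write it up.
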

\begin{proof}
We first argue for the lower complexity bound much as in Proposition 3, but
with a slightly cleaner expression for $2^n$ given by Equation (8), below, rather
than by Equation (6).  We then show that $F$ is P-TIME; the upper
complexity bound then follows by the general argument given in (1.3.5).  We note: 
\begin{equation}
2^n = F(n, n)+F(n,n).
\end{equation}
For Equation (8), the relevant observations are
that $n$ is the largest element of $\text{I}A_{n,1},\ 2^{n-1}$ is the largest
element of $\text{I}B_{n,1}$ and that for all $n, i,\ F_n$ maps $\text{I}A_{n,i}$ onto
$\text{I}B_{n,i}$ in order-preserving fashion.  

The rest of the proof follows the strategy laid out in (1.3.5).  To see that
$F$ is P-TIME we will argue that
\begin{equation} 
\text{For }n > 0\text{ and } k < 2^n,\
m = F(n,k) < 2^n\text{ is the unique solution of Equation (7).}
\end{equation}
\noindent
This is immediate from the last observation given in connection with Equation (8) and 
clearly suffices to show that $F$ is P-TIME, in view of Lemmas 2, 3 and Corollary 3.
That $F$ is simply computed in terms of SBC follows from Equation (7) since all of
the functions that figure there are simply computed in terms of SBC. 
For IStep, this is by item 1. of Remark 2.   For $\alpha$,
this is by item 2. of Remark 3.    For $\beta$ and
$\chi_{\text{RI}B}$ this is by the first paragraph following the proof of Corollary 3.
\end{proof}
\subsection{Obtaining SBC and related Questions}
\label{subsec:3.5}
We begin by showing how to obtain SBC from $F$ and $\text{Inv}F$ (the latter was also introduced in Definition 4).
\begin{corollary}  \textnormal{SBC} is simply computed in terms of $F$ and $\text{Inv}F$.  $\textnormal{Inv}F$
is simply computed in terms of \textnormal{SBC}.  Thus, the joint complexity of $F$ and $\textnormal{Inv}F$
is exactly that of \textnormal{SBC}.
\end{corollary}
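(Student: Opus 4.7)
The plan rests on the order-preserving bijection $F_n : \text{I}A_{n,i} \to \text{I}B_{n,i}$ for each $i \leq n$, recorded in Remark 4, together with explicit descriptions of the least elements of these blocks. By item 3 of Remark 2, the least element of $\text{I}A_{n,i}$ is $\text{SBC}(n,i)$, while by item 2 of Remark 2 the least element of $\text{I}B_{n,i}$ is $2^i - 1$. Order-preservation then yields the pivot identity
\[
F(n, \text{SBC}(n,i)) = 2^i - 1, \qquad \text{equivalently} \qquad \text{Inv}F(n,\, 2^i - 1) = \text{SBC}(n,i),
\]
valid for all $0 < i \leq n$.

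For the first assertion, I obtain SBC from $F$ and $\text{Inv}F$ by supplying $2^i - 1$ as an argument. Equation (8) of Theorem 2, applied at level $i$, gives $2^i = F(i,i) + F(i,i)$, so for $0 < i \leq n$,
\[
\text{SBC}(n,i) \;=\; \text{Inv}F\bigl(n,\, F(i,i) + F(i,i) - 1\bigr),
\]
with the trivial base case $\text{SBC}(n,0) = 0$. This uses only $F$, $\text{Inv}F$, and a handful of additions and subtractions.

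For the second assertion, given $0 < m < 2^n$, set $j := \text{Weight}(m)$. Since $F_n$ is order-preserving between $\text{I}A_{n,j}$ and $\text{I}B_{n,j}$, and since the $s$-th element of $\text{I}A_{n,j}$ is $\text{SBC}(n,j) + s - 1$, if $m = b_{n,j,s}$ then $F_n^{-1}(m) = \text{SBC}(n,j) + s - 1$. Because $s = \beta(n,j,m)$, this gives
\[
\text{Inv}F(n, m) \;=\; \text{SBC}\bigl(n,\, \text{Weight}(m)\bigr) + \beta\bigl(n,\, \text{Weight}(m),\, m\bigr) - 1,
\]
with boundary values ($m = 0$ or $m \geq 2^n$) dispatched by the conventions of the natural encoding. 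The observations following the proof of Corollary 3 show that $\beta$ is simply computed in terms of SBC, so $\text{Inv}F$ is as well.

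Combining these two displayed identities with the upper complexity bound on $F$ already established in Theorem 2, the joint complexity of $F$ and $\text{Inv}F$ must coincide with that of SBC. The main bookkeeping issue, and essentially the only place requiring care, is confirming that the auxiliary pieces (namely $\text{Weight}$, $\beta$, and the power-of-two reduction $2^i = F(i,i) + F(i,i)$) contribute no complexity beyond SBC; each of these is handled by results already proved earlier in Section 3.
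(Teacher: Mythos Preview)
Your argument is correct. The first assertion is handled exactly as in the paper: both you and the authors invoke the order-preserving nature of $F_n$ on the blocks to get $\text{Inv}F(n,2^i-1)=\text{SBC}(n,i)$ and then produce $2^i-1$ via Equation~(8) as $2F(i,i)-1$. One small omission: the paper also notes $\text{SBC}(n,n+1)=2^n$, which is immediate from $F(n,n)+F(n,n)$.

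For the second assertion your route is more direct than the paper's. The authors set up the ``dual'' of Equation~(7),
\[
\alpha (n,\text{Weight}(m),k)\cdot\chi_{\text{RI}A}(n,\text{Weight}(m),k)=\beta (n,\text{Weight}(m),m),
\]
identify the unique solution $k=\text{Inv}F(n,m)$, and then recover the position $t$ with $m=\text{EW}(\text{Weight}(m),t)$ by a binary search over $\bigl[1,\binom{n}{\text{Weight}(m)}\bigr]$. You bypass this machinery by observing directly that $t=\beta(n,\text{Weight}(m),m)$ (item~4 of Remark~3) and that $\text{I}A_{n,j}$ is the interval $[\text{SBC}(n,j),\text{SBC}(n,j+1))$, yielding the closed form
\[
\text{Inv}F(n,m)=\text{SBC}(n,\text{Weight}(m))+\beta(n,\text{Weight}(m),m)-1.
\]
This is cleaner and avoids the detour through $\text{EW}$ and binary search; the paper's approach, on the other hand, emphasizes the structural parallel with Theorem~2 by mirroring Equation~(7). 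A minor citation slip: the fact that $\beta$ is simply computed from $\text{SBC}$ is recorded in the observations following the proof of Lemma~3, not Corollary~3.
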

\begin{proof}  For the first assertion, recall that $\text{SBC}(n,0) = 0$ and note that $\text{SBC}(n,n+1) = 2^n$.   
For $1 \leq i \leq n$, note that $\text{SBC}(n,i) = \text{Inv}F\left ( n,2^i-1\right )$.  But $2^i-1 =
2F(i,i)-1$.

That $\text{Inv}F$ is simply computable from SBC follows from the material of (3.4), and in particular from the following
``dual version'' (interchanging Step and Weight) of Equation 7:
\begin{equation*}
\alpha (n,\text{\textnormal{Weight}}(m),k)\cdot
\chi_{\text{RI}A}(n,\text{\textnormal{Weight}}(n,m),k) =
\beta (n,\text{\textnormal{Weight}}(m),m).
\end{equation*}
\noindent
As in (3.4), all of the functions in the previous displayed equation are simply computed from SBC, and,
given $(n,m)$, the unique solution, $k$, is computed as $\text{SBC}(n,\text{Weight}(m),t)$, where
$m = \text{EW}(\text{Weight}(m),t)$.  We can easily compute $t$ from SBC (and EW) using a binary search argument
with initial interval $\left [ 1,\binom{n}{\text{Weight}(m)}\right ]$.  But this unique solution is just $\text{Inv}F(n,m)$.
The final assertion is immediate from the first two.
\end{proof}
\begin{remark}  It would be ideal if we could show that SBC is simply computed from $F$ alone, since then
the complexity of $F$ would be exactly that of SBC.  The specific obstacle
is being able to carry out the calculation of $t$ in the last sentence of the proof of the Corollary in terms
of $F$ alone, without the use of the binomial coefficient or the function EW.  An indication that this
obstacle may be serious is the general phenomenon that an inverse of a function, $f$, can be significantly more
complex than $f$ itself.  Symmetrically, if we could eliminate the use of $F$ to compute $2^i-1$, it would follow
that the complexity of $\text{Inv}F$ is exactly that of SBC.  This
seems somewhat more feasible.  
\end{remark}
We conclude this section by tieing up some ``odds and ends''.  We first argue 
that $\left\{ F_n\right\}$ is the simplest sequence
of admissible permutations, and so its corresponding triangular array representation 
is the simplest trim, strong triangular array representation for
$\left\{ S^*_n\right\}$.  Finally, we make some
observations concerning the contrast between Equations (6) and (8) 
in light of this status of $\left\{ F_n\right\}$.

Remark 4 and the proof of item 2. of Lemma 1 are the main elements of our argument 
that $\left\{ F_n\right\}$ is the simplest sequence
of admissible permutations.  Recall that each $F_n\upharpoonright IA_{n,i}$ is
the order-preserving bijection between $ IA_{n,i}$ and $ IB_{n,i}$.
While the claim that this is the simplest bijection between these
sets may not be entirely clear, it is far clearer that the identity
permutation on $\left\{ 1, \ldots , \binom{n}{i}\right\}$
{\it is } the simplest permutation of this set.  By Remark 4,
each $\overline{F}_{n,i}$ is the identity permutation on 
$\left\{ 1, \ldots , \binom{n}{i}\right\}$ and so from the 
point of view of the proof of Lemma 1 and the subsequent
paragraph, $\left\{ F_n\right\}$ really is simplest, since
it is represented by the system where each $\sigma_{n,i}$
is the identity permutation.  It should, however, be acknowledged
that we have ``built in'' the role of the increasing enumerations
in this way of representing admissible permutations.

Regarding the contrast between Equations (6) and (8),
what is really at issue is to be able to easily
identify $k(n) = \left ( \pi_{n+1}\right )^{-1}\left ( 2^n\right )$
as a function of $n$, since, trivially, we'll always have that
$2^n = \pi_{n+1}(k(n))$.  For $\left\{ F_n\right\}$, Equation
(8) is based on the easy identification of $k(n)$ as simply
being $n+1$.  It is then natural to expect that for more
complex sequences $\left\{ \pi_n\right\}$, the corresponding
function $k(n)$ will also be more complex, leaving us 
only Equation (6) rather than a simple analogue of Equation (8).  This has
some features in common with the issues discussed in Remark 5, above.
\section{Construction of the Variants of $F$ and Theorem 3}
\label{sec:4}
\subsection{The functions $G$ and $H$}
\label{subsec:4.1}
Here we
construct the variants, $G$ (Definition 7) and $H$ (Definition 11), of $F$.
We impose additional requirements on the admissible permutations, 
$G_n$ and $H_n$, respectively, that are to be encoded.  

The motivation for introducing these variants is to obtain sequences of
permutations whose natural encodings are still P-TIME, 
with SBC as an upper complexity bound (this
will be the content of Theorem 3) and
where the orbit structures of the individual permutations are simpler than those 
of the $F_n$.  We construct the $G_n$ so as to
maximize the number of fixed points.  The construction of the $H_n$ goes
farther:  once all possible fixed points have been identified (the same ones
as for the $G_n$), we maximize the number of two-cycles, so that the $H_n$ 
are as close as possible to being self-inverse.

We construct the $G_n$ in two stages:  we first note the fixed points are
the elements of the $\text{I}A_{n,i} \cap \text{I}B_{n,i}$.
We then proceed much as for $F$:  for each $1 \leq i \leq n$, map ``what
is left of'' $\text{I}A_{n,i}$ in order preserving fashion onto ``what is left of''  
$\text{I}B_{n,i}$.  Of course, this requires that these two sets have the same cardinality;
this will be obvious for the construction of $G_n$, as noted in Remark 6.  

We construct the $H_n$ in three stages, with
the first stage being identical to the first stage in the construction of the $G_n$.
We interpolate a new second stage, where we identify a maximal set of two-cycles.
The third and final stage is analogous to the second stage in the definition of the
$G_n$, in that, for each $1 \leq i \leq n$, we map ``what is left of'' $\text{I}A_{n,i}$ 
 in order preserving fashion onto ``what is left of'' $\text{I}B_{n,i}$.  This time, ``what is left''
means after removing the fixed points {\it and } the points involved in the two-cycles
identified in the second stage.  As in the second stage of the construction of $G_n$,
in order to carry out the third and final
stage for the $H_n$, it must again be true that for each $i$, ``what is left of'' $\text{I}A_{n,i}$ has the same
cardinality as ``what is left of'' $\text{I}B_{n,i}$.  This is the content of Proposition 5.

The constructions of the $G_n$ and of the $H_n$ will both be uniform in $n$, so
for the remainder of this subsection, we take $n$ to be fixed.
The next definition is analogous to Definition 2.  It
introduces the $\text{I}A^1_{n,i}$
and $\text{I}B^1_{n,i}$: ``what is left of $\text{I}A_{n,i}$, resp. $\text{I}B_{n,i}$'', after removing
the fixed points.
\begin{definition}  For $i < n,\ 
\text{I}A^1_{n,i} := \text{I}A_{n,i} \setminus \left ( \text{I}A_{n,i} \cap \text{I}B_{n,i} \right ),\
\text{I}B^1_{n,i} := \text{I}B_{n,i} \setminus \left ( \text{I}A_{n,i} \cap \text{I}B_{n,i} \right ).$
We also let $\overline \alpha^0_{n,i} = \overline \beta^0_{n,i} :=
\left\vert \text{I}A_{n,i} \cap \text{I}B_{n,i}\right\vert$, and set 
$\alpha^1_{n,i} :=\left\vert\text{I}A^1_{n,i}\right\vert,\ 
\beta^1_{n,i} := \left\vert\text{I}B^1_{n,i}\right\vert$.  

{\bf We now invoke Definition 3 with $d = 1$ and  $U_n = \{ 0, \ldots , n\}$
and with }$X_{n,i} = \text{I}A^1_{n,i}${\bf\ or }$X_{n,i} = \text{I}B^1_{n,i}$.
{\it This defines }$\text{RI}A^1,\ \text{RI}B^1,\ \text{cd}_{\text{RI}A^1},\ 
\text{cd}_{\text{RI}B^1},\ \text{ERI}A^1,\ \text{ERI}B^1$.
We use $\alpha^1,\ \beta^1$ to denote 
$\text{cd}_{\text{RI}A^1},\ 
\text{cd}_{\text{RI}B^1}$, respectively.
{\it The notation for the increasing enumerations will be }$a^1_{n,i,s},\ b^1_{n,i,s}$.
\end{definition}
\begin{remark}  For $i < n$, the following observations are obvious:
\begin{enumerate}  
\item $\alpha^1_{n,i} = \binom{n}{i} - \overline \alpha^0_{n,i} =
\binom{n}{i} - \overline \beta^0_{n,i} = \beta^1_{n,i}$, 
\item for $k \in \text{I}A_{n,i},\ k \in \text{I}A^1_{n,i}$
iff $\text{Weight}(k) \neq i$; for
$m \in \text{I}B_{n,i},\ m \in \text{I}B^1_{n,i}$ iff $\text{IStep}(n,k) \neq i$.
\qed
\end{enumerate}
\end{remark}
\begin{definition}  $G_n$ is the permutation of $\left\{ 0, \ldots , 2^n-1\right \}$ defined as follows.
If $0 \leq k < 2^n$, let $i = \text{IStep}(n,k)$, then:

\begin{equation}
    G_n(k) := \begin{cases}
               k               & \text{if }\text{Weight}(k) = i \\
               b^1_{n,i,s} & \text{where } s = \alpha^1(n,i,k),\text{ otherwise.}
           \end{cases}
\end{equation}
Take 
$G:  \mathbb{N}\times \mathbb{N}
\to \mathbb{N}$ to be the natural encoding of the sequence
$\left\{ G_n\right\}$. 
\end{definition} 

It is clear that $G_n$ is an admissible permutation of 
$\left\{ 0, \ldots , 2^n-1\right \}$, with the additional
property that $G_n$ is the identity on $k$ such that
$\text{IStep}(n,k) = \text{Weight}(k)$, i.e., $G$
is maximal, among admissible permutations of 
$\left\{ 0, \ldots , 2^n-1\right \}$, for agreement
with the identity permutation.  Further, in analogy with
Remark 4 and (3.5), we argue that $G$ is the simplest such 
admissible permutation.  This is based on the analogues
of the $\overline{F}_{n,i}$ defined on $\left\{ 1, \ldots ,
\alpha^1_{n,i}\right\}$, starting from $G_n\upharpoonright A^1_{n,i}$.
Each of these is the identity on $\left\{ 1, \ldots ,
\alpha^1_{n,i}\right\}$.

We turn now to the definition of the $H_n$ and $H$. 
Once again, we will proceed in analogy with Defintions 2 - 4.  Our first
task is to identify those $k$ which will be part of a two-cycle.
 
In order to motivate what follows, suppose that $\pi$
is an admissible permutation of $\left\{ 0, \ldots , 2^n-1\right \}$
with the property we have built into $G_n$:  that $\pi(s) = s$
whenever $\text{IStep}(n,s) = \text{Weight}(s)$.  Suppose
further that $k \neq m, \pi (k) = m$ and $\pi( m) = k$.  Let
$i = \text{IStep}(n,k),\ j = \text{Weight}(k)$.  Then
$i \neq j,\ i = \text{Weight}(m),\
j = \text{IStep}(n,m)$.  Stated otherwise, we have
that $k \in \text{I}A^1_{n,i} \cap \text{I}B^1_{n,j}$ and
$m \in \text{I}A^1_{n,j} \cap \text{I}B^1_{n,i}$.

\begin{definition}
For  $i, j < n$ with $i \neq j$ we set:  $\text{I}C^1_{n,i,j} := \text{I}A^1_{n,i} \cap \text{I}B^1_{n,j}$, and
$\gamma^1_{n,i,j} := \left\vert \text{I}C^1_{n,i,j}\right\vert$.  

{\bf We now invoke Definition 3 with $d = 2$ and  $U_n = \{ (i,j) \vert 0 \leq i, j \leq n,\ i \neq j\}$
and with }$X_{n,i,j} = \text{I}C^1_{n,i,j}$.  
{\it This defines }$\text{RI}C^1,\ \text{cd}_{\text{RI}C^1},\ \text{ERI}C^1$. 
We use $\gamma^1$ to denote $\text{cd}_{\text{RI}C^1}$.
{\it The notation for the increasing enumerations will be }$c^1_{n,i,j,s}$.
\end{definition}
\begin{remark}  The following observations are obvious:

For
 $\displaystyle{ i < n,\  
\text{I}A^1_{n,i} =
 \bigsqcup_{0 \leq j < n,\ j \neq i}
 \text{I}C^1_{n,i,j}}$, 
 and
 $\displaystyle{\text{I}B^1_{n,i} =
 \bigsqcup_{0 \leq j < n,\ j \neq i}
 \text{I}C^1_{n,j,i}}$.
\qed
\end{remark}
It would be natural to attempt to match up the elements of
the $\text{I}C^1_{n,i,j}$ with those of correspoding $\text{I}C^1_{n,j,i}$ 
to form the two-cycles.  However, the following example
shows that even for fairly small $n$, this will not be possible,
since it can happen that for certain $i \neq j,\ i,\ j < n,\ 
\gamma^1_{n,i,j} \neq \gamma^1_{n,j,i}$.
When $n=8$, we have: 
\[
\text{I}C^1_{8,2,4} = \{ 15,23,27,29,30\} \text{, while }
\text{I}C^1_{8,4,2} = \{ 96,129,130,132,136,144,160\}.
\]
When $\gamma^1_{n,i,j} > 
\gamma^1_{n,j,i}$ there are various reasonable ways of
choosing the $\gamma^1_{n,j,i}-\text{many}$ elements
of $\text{I}C^1_{n,i,j}$ which will form 2-cycles with the
elements of $\text{I}C^1_{n,j,i}$.  The particular way we
have chosen in what follows is to {\it exclude} the ``extreme''
elements of $\text{I}C^1_{n,i,j}$:  those that are farthest from
the elements of $\text{I}C^1_{n,j,i}$.  This is codified in the next Definition.   
\begin{definition}   For $i, j < n$ with $i \neq j$, let 
$\overline{\gamma}^1_{n,i,j} := 
\text{min}\left ( \gamma^1_{n,i,j}, \gamma^1_{n,j,i}\right )$ and set:
\begin{equation}
    \text{I}\overline{C}^1_{n,i,j} := \begin{cases}
                \text{I}C^1_{n,i,j}               & \text{if }\gamma^1_{n,i,j} \leq \gamma^1_{n,j,i} \\
              \left\{ c^1_{n,i,j,s}\vert 1 \leq s \leq \gamma^1_{n,j,i}\right\}
                                                              & \text{if }\gamma^1_{n,i,j} > \gamma^1_{n,j,i}\text{ and }i > j \\
              \left\{ c^1_{n,i,j,t+s}\vert 1 \leq s \leq \gamma^1_{n,j,i}\right\}
                                                               & \text{where }t = \gamma^1_{n,i,j} - \gamma^1_{n,j,i}\text{ otherwise.}                                                             
      \end{cases}
\end{equation}
In the second or third case, let
$\text{I}C^2_{n,i,j} :=
\text{I}C^1_{n,i,j} \setminus \text{I}\overline{C}^1_{n,i,j}$.  

{\bf We now invoke Definition 3 with $d = 2$ and  $U_n = \{ (i,j) \vert 0 \leq i, j \leq n,\ i \neq j\}$
and with }$X_{n,i,j} = \text{I}\overline{C}^1_{n,i,j}$.
{\it This defines }$\text{RI}\overline{C}^1,\ \text{cd}_{\text{RI}\overline{C}^1},\ \text{ERI}\overline{C}^1$.
We use $\overline{\gamma}^1$ to denote $\text{cd}_{\text{RI}\overline{C}^1}$.
{\it The notation for the increasing enumerations will be }$\overline{c}^1_{n,i,j,s}$.
\end{definition}
\begin{definition}
For $i < n$, we set:
\[
\text{I}A^2_{n,i} := \bigsqcup_{0 \leq i, j < n,\ i \neq j}\text{I}C^2_{n,i,j}\text{ and }
\text{I}B^2_{n,i} := \bigsqcup_{0 \leq i, j < n,\ i \neq j}\text{I}C^2_{n,j,i}.
\]
We also set $\alpha^2_{n,i} := \left\vert\text{I}A^2_{n,i}\right\vert$ and
$\beta^2_{n,i} := \left\vert\text{I}B^2_{n,i}\right\vert$.

{\bf We now invoke Definition 3 with $d = 1$ and  $U_n = \{ 0, \ldots , n\}$
and with }$X_{n,i} = \text{I}A^2_{n,i}${\bf or }$X_{n,i} = \text{I}B^2_{n,i}$.
{\it This defines }$\text{RI}A^2,\ \text{RI}B^2,\ \text{cd}_{\text{RI}A^2},\ 
\text{cd}_{\text{RI}B^2},\ \text{ERI}A^2,\ \text{ERI}B^2$.
We use $\alpha^2,\ \beta^2$ to denote 
$\text{cd}_{\text{RI}A^2},\ 
\text{cd}_{\text{RI}B^2}$, respectively.
{\it The notation for the increasing enumerations will be }$a^1_{n,i,s},\ b^1_{n,i,s}$.
\end{definition}
\begin{proposition}
For $i < n,\ \alpha^2_{n,i} = \beta^2_{n,i}$ .
\end{proposition}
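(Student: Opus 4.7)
The plan is to reduce the desired equality to the already-known equality $\alpha^1_{n,i} = \beta^1_{n,i}$ (item 1 of Remark 6) by a double-counting / swap-of-indices argument, using the observation that $\text{I}C^2_{n,i,j}$ is empty precisely when $\gamma^1_{n,i,j} \leq \gamma^1_{n,j,i}$.

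First, I would unpack the cardinalities of the $\text{I}C^2_{n,i,j}$. From the three cases in Equation (11) defining $\text{I}\overline{C}^1_{n,i,j}$, together with the definition $\text{I}C^2_{n,i,j} := \text{I}C^1_{n,i,j} \setminus \text{I}\overline{C}^1_{n,i,j}$, I get the uniform identity
\[
\left\vert \text{I}C^2_{n,i,j}\right\vert \;=\; \gamma^1_{n,i,j} - \overline{\gamma}^1_{n,i,j} \;=\; \max\!\left(\gamma^1_{n,i,j} - \gamma^1_{n,j,i},\,0\right).
\]
In particular, for each unordered pair $\{i,j\}$ with $i \neq j$, exactly one of $\left\vert \text{I}C^2_{n,i,j}\right\vert$ and $\left\vert \text{I}C^2_{n,j,i}\right\vert$ is zero, and the signed difference satisfies the key telescoping identity
\[
\left\vert \text{I}C^2_{n,i,j}\right\vert - \left\vert \text{I}C^2_{n,j,i}\right\vert \;=\; \gamma^1_{n,i,j} - \gamma^1_{n,j,i}.
\]

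Next, I would invoke Remark 7, which gives the disjoint decompositions $\text{I}A^1_{n,i} = \bigsqcup_{j \neq i}\text{I}C^1_{n,i,j}$ and $\text{I}B^1_{n,i} = \bigsqcup_{j \neq i}\text{I}C^1_{n,j,i}$, together with item 1 of Remark 6, which gives $\alpha^1_{n,i} = \beta^1_{n,i}$. Taking cardinalities yields
\[
\sum_{j \neq i}\gamma^1_{n,i,j} \;=\; \alpha^1_{n,i} \;=\; \beta^1_{n,i} \;=\; \sum_{j \neq i}\gamma^1_{n,j,i}.
\]

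Finally, using the disjoint decompositions of $\text{I}A^2_{n,i}$ and $\text{I}B^2_{n,i}$ from Definition 10, summing the key telescoping identity over $j \neq i$, and substituting the previous display, I obtain
\[
\alpha^2_{n,i} - \beta^2_{n,i} \;=\; \sum_{j \neq i}\!\left(\left\vert \text{I}C^2_{n,i,j}\right\vert - \left\vert \text{I}C^2_{n,j,i}\right\vert\right) \;=\; \sum_{j \neq i}\!\left(\gamma^1_{n,i,j} - \gamma^1_{n,j,i}\right) \;=\; 0,
\]
as required. The argument is essentially bookkeeping; the only step requiring any care is verifying that the three cases in Equation (11) collapse uniformly to $\gamma^1_{n,i,j} - \overline{\gamma}^1_{n,i,j}$, but this is immediate from the case analysis together with the definition of $\overline{\gamma}^1_{n,i,j}$ as the minimum, so there is no serious obstacle.
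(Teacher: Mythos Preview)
Your proof is correct and follows essentially the same idea as the paper's: both reduce to $\alpha^1_{n,i}=\beta^1_{n,i}$ together with the symmetry $\overline{\gamma}^1_{n,i,j}=\overline{\gamma}^1_{n,j,i}$ (your telescoping identity $|\text{I}C^2_{n,i,j}|-|\text{I}C^2_{n,j,i}|=\gamma^1_{n,i,j}-\gamma^1_{n,j,i}$ is exactly equivalent to this symmetry, since $|\text{I}C^2_{n,i,j}|=\gamma^1_{n,i,j}-\overline{\gamma}^1_{n,i,j}$). The only packaging difference is that the paper first rewrites $\text{I}A^2_{n,i}=\text{I}A^1_{n,i}\setminus\bigsqcup_{j\neq i}\text{I}\overline{C}^1_{n,i,j}$ (and similarly for $B$) and then subtracts, whereas you sum the $|\text{I}C^2|$ pieces directly from Definition~10; both computations are the same arithmetic rearranged.
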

\begin{proof}
 We note first that 
\[
\text{I}A^2_{n,i} = \text{I}A^1_{n,i} \setminus\ 
 \bigsqcup_{0 \leq j < n,\ j \neq i}
\text{I} \overline{C}^1_{n,i,j}\text{ and that }
\text{I}B^2_{n,i} = \text{I}B^1_{n,i} \setminus\ 
\bigsqcup_{0 \leq j < n,\ j \neq i}
 \text{I}\overline{C}^1_{n,j,i}.  
\]
This follows from 
 the definitions of the $\text{I}A^2_{n,i},\ \text{I}B^2_{n,i}$ and the $\text{I}\overline{C}^1_{n,i,j}$,
 and Remark 7.
 But then, since, by construction, $\overline{\gamma}^1_{n,i,j} 
= \overline{\gamma}^1_{n,j,i}$, for all relevant $n,i,j$, we have that
\[
\left\vert \bigsqcup_{0 \leq j < n,\ j \neq i}
 \text{I}\overline{C}^1_{n,i,j}\right\vert
  = \left\vert \bigsqcup_{0 \leq j < n,\ j \neq i}
 \text{I}\overline{C}^1_{n,j,i}\right\vert.
\]
 Finally, by construction,  $\left\vert A^1_{n,i}\right\vert =
 \left\vert B^1_{n,i}\right\vert$.
It then clearly follows that $\alpha^2_{n,i} = 
\beta^2_{n,i}.$
\end{proof}
We can now complete the construction of the $H_n$.
Proposition 5 makes it clear that
the third case of Equation (12), below, will provide a coherent
definition and that the $H_n$ we define there are admissible
permutations of $\left\{ 0, \ldots , 2^n-1\right \}$ with the
additional property of $G_n$.
\begin{definition}
$H_n$ is the 
permutation of $\left\{ 0, \ldots , 2^n-1\right \}$ 
 defined as follows.
If $0 \leq k < 2^n$, let $i = \text{IStep}(n,k), j = \text{Weight}(k)$.  Then:

\begin{equation}
    H_n(k) := \begin{cases}
               k               & \text{if }j = i \\
               \overline{c}^1_{n,i,j,s}   & \text{where }s = \overline{\gamma}^1_n(i,j,k)
                                                            \text{ if } k \in \text{I}\overline{C}^1_{n,i,j} \\
               b^2_{n,i,s}                       & \text{where } s = \alpha^2_n(i,k),\text{ otherwise.}
                      \end{cases}
\end{equation}
Also (as usual), let:
$H:  \mathbb{N}\times \mathbb{N}
\to \mathbb{N}$ to be the natural encoding of the sequence
$\left\{H_n\right\}$ of admissible
permutations. 
\end{definition}

With reference to the discussion in the final
paragraph of (3.5), related to the form
of the lower bound expression, we should note here,
that even for $G$, the situation is somewhat more
complicated:  it may fail to be true that $G_n(n)
= 2^{n-1}$ (or, in the notation of the final paragraph of (3.5),
that $k(n) = n$):  this will happen exactly if $n$ is
is a power of 2, since then $\text{Weight}(n) = 1 =
\text{IStep}(n,n)$ and so $G_n(n) = n \neq 2^n$.  In
this case, however, we'll have that $G_n(n-1) = 2^{n-1}$.  This is the basis
for Equation (13), below, which is the analogue for $G$ of Equation (8).
We have not carried out a similar analysis of the $k(n)$ function for $H$, 
and so, in the proof of Theorem 3, in (4.2), we content ourselves
with the general lower bound expression given by Equation (6).
Similar issues represent similar (but even worse) obstacles to obtaining
an analogue of Corollary 4 for $G$ or $H$.
\subsection{Theorem 3}
\label{subsec:4.2}
The analogue of Theorem 2 for the functions $G$ and $H$
is provided by Theorem 3.  
The lower bound statement argument
divides, as just discussed, but things rejoin for the 
proof that $G$ and $H$ are P-TIME.  Proposition 6 is the
main technical tool; it incorporates the contributions of Proposition 4, Lemma 2,  Lemma 3 
and Corollary 3 in the proof of Theorem 2.
\begin{proposition}  Each of the following relations is both \textnormal{P-TIME decidable and tame}:\\
$\text{\textnormal{RI}}A^1,\ \text{\textnormal{RI}}A^1,\ \text{\textnormal{RI}}C^1,\
\text{\textnormal{RI}}\overline{C}^1,\ \text{\textnormal{RI}}A^2,\
\text{\textnormal{RI}}B^2$.  Also, all of the case conditions of
Equations (10) and (12) are \textnormal{P-TIME decidable}. 
\end{proposition}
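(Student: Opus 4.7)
The plan is to bootstrap from the results already proved in subsection (3.4), namely from Proposition 4, Lemma 3, and Corollary 3. These tell us that $\text{RI}A$ and $\text{RI}B$ are P-TIME decidable and tame, and in particular that the functions $\text{SBC}$, $\text{IStep}$, $\text{Weight}$, $\alpha$, $\beta$, $\chi_{\text{RI}A}$, and $\chi_{\text{RI}B}$ are all P-TIME. Every relation introduced in (4.1) is built from $\text{RI}A$ and $\text{RI}B$ by Boolean operations and bounded sums, so P-TIME decidability will amount to a Boolean combination of P-TIME predicates, and tameness will amount to expressing each new cardinality function as a polynomially-bounded arithmetic combination of $\alpha$, $\beta$, and the cardinality functions already handled.

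First I would treat $\text{RI}A^1$, $\text{RI}B^1$, and $\text{RI}C^1$. For $\text{RI}A^1$ one has $\ell \in \text{I}A^1_{n,i}$ iff $\chi_{\text{RI}A}(n,i,\ell)=1$ and $\chi_{\text{RI}B}(n,i,\ell)=0$, while $\alpha^1(n,i,k) = \alpha(n,i,k) - N(n,i,k)$, where $N(n,i,k)$ counts those $\ell \in \{1,\ldots,k\}$ satisfying $\text{IStep}(n,\ell)=i$ and $\text{Weight}(\ell)=i$. By item 1 of Remark 2, the set $\text{I}A_{n,i}$ is the interval $[\text{SBC}(n,i),\text{SBC}(n,i+1)-1]$, so $N(n,i,k)$ is a difference of two values of $\beta$, hence P-TIME. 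The argument for $\text{RI}B^1$ is symmetric, with the roles of $\alpha$ and $\beta$ swapped. For $\text{RI}C^1$ the hypothesis $i \neq j$ collapses the defining intersection to $\text{IStep}(n,\ell)=i$ and $\text{Weight}(\ell)=j$, so $\chi_{\text{RI}C^1}$ is a product of two P-TIME characteristic functions, and $\gamma^1(n,i,j,k)$ is again a difference of two $\beta$ values restricted to the interval $[\text{SBC}(n,i),\text{SBC}(n,i+1)-1]$.

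The hard part is $\text{RI}\overline{C}^1$, because of the three-way case split in Equation (11) and the need to compare $\gamma^1_{n,i,j}$ with $\gamma^1_{n,j,i}$. The key observation is that these two totals are nothing but $\gamma^1(n,i,j,2^n-1)$ and $\gamma^1(n,j,i,2^n-1)$, both P-TIME by the previous step, so the case selector of Equation (11) is itself P-TIME decidable. Within case 1 the new relation coincides with $\text{RI}C^1$; within cases 2 and 3 the condition $\ell \in \text{I}\overline{C}^1_{n,i,j}$ reduces to $\ell \in \text{I}C^1_{n,i,j}$ together with a P-TIME bound on the position $\gamma^1(n,i,j,\ell)$ (being $\leq \gamma^1_{n,j,i}$ in case 2, and $> \gamma^1_{n,i,j}-\gamma^1_{n,j,i}$ in case 3). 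The cardinality function $\overline{\gamma}^1(n,i,j,k)$ is then, in the three respective cases, $\gamma^1(n,i,j,k)$, $\min(\gamma^1(n,i,j,k),\gamma^1_{n,j,i})$, and $\max(0,\gamma^1(n,i,j,k)-(\gamma^1_{n,i,j}-\gamma^1_{n,j,i}))$, each P-TIME. This simultaneously disposes of the second case condition in Equation (12).

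For $\text{RI}A^2$ and $\text{RI}B^2$, Definition 6 together with Remark 7 gives $\text{I}A^2_{n,i} = \bigsqcup_{j \neq i}(\text{I}C^1_{n,i,j} \setminus \text{I}\overline{C}^1_{n,i,j})$, so an $\ell$ with $\text{IStep}(n,\ell)=i$ belongs to $\text{I}A^2_{n,i}$ iff, setting $j = \text{Weight}(\ell)$, we have $j \neq i$ and $\chi_{\text{RI}\overline{C}^1}(n,i,j,\ell)=0$; this is P-TIME. For tameness, $\alpha^2(n,i,k) = \sum_{0 \leq j < n,\ j \neq i}\bigl(\gamma^1(n,i,j,k) - \overline{\gamma}^1(n,i,j,k)\bigr)$ and, dually, $\beta^2(n,i,k) = \sum_{0 \leq j < n,\ j \neq i}\bigl(\gamma^1(n,j,i,k) - \overline{\gamma}^1(n,j,i,k)\bigr)$, each a sum of at most $n$ P-TIME terms. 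Finally, the remaining case conditions of Equations (10) and (12), namely $\text{Weight}(k)=\text{IStep}(n,k)$, are P-TIME since $\text{Weight}$ and $\text{IStep}$ are. The only recurring technical nuisance is the boundary bookkeeping around $\text{SBC}(n,i)$ and $\text{SBC}(n,i+1)-1$, handled uniformly by clamping arguments of $\beta$ into that interval.
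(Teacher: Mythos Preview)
Your proposal is correct and follows essentially the same route as the paper's proof: bootstrap from the P-TIME/tameness results for $\text{RI}A$ and $\text{RI}B$ established in (3.4), then express each new cardinality function ($\alpha^1,\beta^1,\gamma^1,\overline{\gamma}^1,\alpha^2,\beta^2$) as a short arithmetic combination of previously handled ones, and invoke item 6 of Remark 3 for P-TIME decidability. The only cosmetic differences are that you write $\alpha^2(n,i,k)=\sum_{j\neq i}(\gamma^1-\overline{\gamma}^1)$ where the paper writes the equivalent $\alpha^1-\sum_{j\neq i}\overline{\gamma}^1$, and you package the three cases for $\overline{\gamma}^1$ as $\min/\max$ expressions where the paper spells out each case separately.
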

\begin{proof}
For each of the listed relations, the hypothesis of item 6. of
Remark 3 clearly holds, and so it will suffice to establish tameness.
We weave our way through the statements to be proved in the following order.
First, the P-TIME decidability of the case condition of Equation (10), then 
the tameness of $\text{RI}A^1,\ \text{RI}B^1,\ 
\text{RI}C^1$, then the P-TIME decidability of the second case condition of Equation (12) and finally,
the tameness of $\text{RI}\overline{C}^1,\ \text{RI}A^2$ and $\text{RI}B^2$.
Throughout the proof we will have $x \in \mathbb{Z}^+$ with $x < 2^n$.

For the case condition of Equation (10) (and the first case condition of Equation (12)), 
our starting point is Proposition 4, itself.  The case condition 
 is just whether $
\text{IStep}(n,k) =
\text{Weight}(k)$:
if so, then it is the first case of Equations (10), (12) of Definitions 7, 11 that applies:
$G(n,k) = H(n,k) = k$.  It follows from Proposition 4 
that the relation expressed by the last displayed equation
is P-TIME decidable.

For the tameness of $\text{RI}A^1$, note that
for each $(n,i),\ \text{I}A^1_{n,i} =\text{I}A_{n,i} \setminus 
\left ( \text{I}A_{n,i} \cap \text{I}B_{n,i}\right )$. 
Recall that $\beta(n,i,x) -
\beta(n,i,\text{SBC}(n,i)-1)$ computes 
$\left\vert\left\{ m \in \text{I}B_{n,i}\vert \text{SBC}(n,i) \leq m \leq x\right\}\right\vert$.
It follows that $\alpha^1(n,i,x) = \alpha(n,i,x) +
\beta(n,i,\text{SBC}(n,i)-1) - \beta(n,i,x)$. 
Similarly, $\text{I}B^1_{n,i} = \text{I}B_{n,i} \setminus [\text{SBC}(n,i),\ \text{SBC}(n,i+1))$.
Thus, $\beta^1(n,i,x) = \beta (n,i,x) - \alpha (n,i,x)$ and so $\alpha^1$ and $\beta^1$
are both P-TIME.  Therefore, $\text{RI}A^1,\ \text{RI}B^1$ are both tame.

For the tameness of $\text{RI}C^1$, just note that 
$\text{I}C^1_{n,i,j} = 
\left\{m \in \text{I}B^1_{n,j}\vert\text{SBC}(n,i) \leq m < \text{SBC}(n,i+1)\right\}.$
It follows that if $x < \text{SBC}(n,i)$ then $\gamma^1(n,i,j,x) = 0$, and if
$\text{SBC}(n,i+1)-1 \leq x < 2^n$ then $\gamma^1(n,i,j,x) = 
\beta^1(n,j,\text{SBC}(n,i+1)-1) - \beta^1(n,j,\text{SBC}(n,i)-1)$.  Finally, 
if $\text{SBC}(n,i) \leq x < \text{SBC}(n,i+1) - 1$, then $\gamma^1(n,i,j,x) = 
\beta^1(n,j,x) - \beta^1(n,j,\text{SBC}(n,i)-1)$.  Thus, $\gamma^1$ is P-TIME 
and so $\text{RI}C^1$ is tame.  Note that by Lemmas 2 and 3,
we have that $\text{ERI}C^1$ is P-TIME, i.e., for all
relevant $(n,i,j)$ and all $s$ with $1 \leq s \leq \gamma^1_{n,i,j},\
c^1_{n,i,j,s}$ is a P-TIME function of $(n,i,j,s)$.

For the tameness of $\text{RI}\overline{C}^1$, note that if
$\gamma^1_{n,i,j} \leq \gamma^1_{n,j,i}$, then $\overline{\gamma}^1(n,i,j,x) = 
\gamma^1(n,i,j,x)$.  If $\gamma^1_{n,i,j} > \gamma^1_{n,j,i}$
and $i > j$, let $s = \gamma^1_{n,j,i}$.  If $x \leq c^1_{n,i,j,s}$, then 
$\overline{\gamma^1}(n,i,j,x)
= \gamma^1(n,i,j,x)$, while if $c^1_{n,i,j,s} < x < 2^n$, then
$\overline{\gamma^1}(n,i,j,x) = s$.  Finally, if $\gamma^1_{n,i,j} > 
\gamma^1_{n,j,i}$ and $i < j$, let $t = \gamma^1_{n,i,j} -
\gamma^1_{n,j,i}$.  If $x < c^1_{n,i,j,t+1}$,
then $\overline{\gamma}^1(n,i,j,x) = 0$, while if $c^1_{n,i,j,t+1} \leq
x < 2^n$, then $\overline{\gamma}^1(n,i,j,x) = \gamma^1(n,i,j,x) - t$. 
Thus, $\overline{\gamma}^1$ is P-TIME, and so
$\text{RI}\overline{C}^1$ is tame, and therefore is P-TIME
decidable.  We mention this explicitly only because it is exactly the second case condition of Equation (12).

Finally, we show that $\text{RI}A^2$ and $\text{RI}B^2$ are tame.  For $\text{RI}A^2$,
if $0 < i \leq n$, note:
\[
\text{I}A^2_{n,i} = \text{I}A^1_{n,i} \setminus \bigsqcup_{1 \leq j \leq n,\ j \neq i}\text{I}\overline{C}^1_{n,i,j} .
\]
\noindent
If $x < \text{SBC}(n,i)$, then $\alpha^2(n,i,x) = 0$; otherwise:
\[
\alpha^2(n,i,x) = \alpha^1(n,i,x) - \sum_{1 \leq j \leq n,\ j \neq i}\overline{\gamma}^1(n,i,j,x),
\]
\noindent
and so $\text{RI}A^2$ is tame.  For $\text{RI}B^2$, we have the analogous observation: for $0 < i \leq n$
\[
\text{I}B^2_{n,i} = \text{I}B^1_{n,i} \setminus  \bigsqcup_{1 \leq j \leq n,\ j \neq i}\text{I}\overline{C}^1_{n,j,i} .
\]
Then, $\displaystyle{\beta^2(n,i,x) = \beta^1(n,i,x) - \sum_{1 \leq j ,\text{IStep}(n,x),\ j \neq i}\overline{\gamma}^1(n,j,i,x)}$,
so $\text{RI}B^2$ is tame.
\end{proof}
\begin{theorem}  Each of $G,\ H$ is \textnormal{P-TIME} and is simply computed in terms of \textnormal{SBC}.
\end{theorem}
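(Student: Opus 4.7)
The plan is to mirror the structure of the proof of Theorem 2, combining a simple explicit expression for $2^n$ (for the lower bound) with an appeal to Proposition 6 and Lemma 2 (for the upper bound and the reduction to SBC). For the lower bound, in the case of $G$, I would establish the analogue of Equation (8), namely
\begin{equation}
2^n \;=\; G(n, k(n)) + G(n, k(n)),
\end{equation}
where $k(n) = n$ when $n$ is not a power of $2$ (so that $\text{Weight}(n) \neq 1 = \text{IStep}(n,n)$ and $G_n(n) = 2^{n-1}$ as for $F_n$), and $k(n) = n-1$ when $n$ is a power of $2$ (since then $G_n(n) = n$ is a fixed point, and the next-largest element of $\text{I}A^1_{n,1}$ maps to $2^{n-1}$ in order-preserving fashion). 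This case analysis is P-TIME since testing whether $n$ is a power of $2$ is trivial. For $H$, as the authors announce in the paragraph preceding subsection (4.2), I would invoke the general lower bound Equation (6) from Proposition 3, avoiding any delicate analysis of the function $k(n)$ for $H$.

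For the upper bound, the strategy is to show that each case of the defining Equations (10) and (12) is evaluated by a P-TIME procedure. Proposition 6 already supplies the P-TIME decidability of all case conditions (the condition $\text{Weight}(k) = \text{IStep}(n,k)$ for the fixed-point case, and the condition $k \in \text{I}\overline{C}^1_{n,i,j}$ for the two-cycle case of $H$, plus the auxiliary $\alpha^1, \beta^1, \gamma^1, \overline{\gamma}^1, \alpha^2, \beta^2$). It remains to show that the three enumerating-function values $b^1_{n,i,s}$, $\overline{c}^1_{n,i,j,s}$ and $b^2_{n,i,s}$ appearing in the definitions of $G_n$ and $H_n$ are P-TIME in their arguments. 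But Proposition 6 asserts tameness of $\text{RI}B^1$, $\text{RI}\overline{C}^1$ and $\text{RI}B^2$, so Lemma 2 delivers exactly this: each of $\text{ERI}B^1$, $\text{ERI}\overline{C}^1$ and $\text{ERI}B^2$ is P-TIME, computed by the binary-search algorithm of Lemma 2 applied to the corresponding cardinality function.

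For the conclusion that $G$ and $H$ are simply computed in terms of SBC, I would observe that the explicit formulas in the proof of Proposition 6 express each of $\alpha^1$, $\beta^1$, $\gamma^1$, $\overline{\gamma}^1$, $\alpha^2$, $\beta^2$ as a short combination of $\alpha$, $\beta$ and SBC, together with already-computed values of $c^1_{n,i,j,s}$; and by the remarks following the proof of Lemma 3, $\alpha$ and $\beta$ (and hence $\chi_{\text{RI}B}$) are themselves simply computed from SBC. The binary-search routine of Lemma 2, when run with a cardinality function simply computed from SBC, produces enumerating values simply computed from SBC as well. Feeding these back into Equations (10) and (12) displays $G(n,k)$ and $H(n,k)$ as a case-by-case combination of SBC-level primitives, which is the required conclusion.

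The main obstacle, as anticipated by the authors, is the technical bookkeeping in the three-case definition of $H$, particularly the construction of $\text{I}\overline{C}^1_{n,i,j}$, which depends on the sign of $\gamma^1_{n,i,j} - \gamma^1_{n,j,i}$ and selects either an initial or terminal segment of the enumeration $c^1_{n,i,j,\cdot}$. The correctness of the formulas for $\overline{\gamma}^1$ given in the proof of Proposition 6 hinges on carefully handling the comparison of $x$ with $c^1_{n,i,j,s}$ for the threshold index $s$, and this comparison is what makes $\text{RI}\overline{C}^1$ nontrivial to show tame. Once Proposition 6 has been absorbed, however, the remainder of the argument is a routine (though multi-case) assembly along the lines of Theorem 2.
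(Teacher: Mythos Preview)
Your proposal is correct and follows essentially the same approach as the paper. A few small remarks on points of contact and one minor edge case:

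For the lower bound for $G$, the paper packages your case analysis into the single expression
\[
2^n = \max\bigl(G(n,n),\,G(n,n-1)\bigr) + \max\bigl(G(n,n),\,G(n,n-1)\bigr),
\]
which is equivalent to yours in spirit but has the advantage of handling the small cases $n=1,2$ uniformly. Your explicit rule $k(n)=n-1$ when $n$ is a power of $2$ breaks down there: for $n=2$ one has $\text{I}A^1_{2,1}=\text{I}B^1_{2,1}=\varnothing$ (both $1$ and $2$ are fixed points of $G_2$), so $G(2,1)=1\neq 2^{1}$, and similarly for $n=1$. The $\max$ formulation sidesteps this.

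For the upper bound, the paper does exactly what you describe, but records the intermediate step by writing out explicit analogues of Equation~(7) (Equations (14)--(16)): relations of the form $\beta^1(n,i,m)\chi_{\text{RI}B^1}(n,i,m)=\alpha^1(n,i,k)$, etc., whose unique solution $m$ is then identified with the corresponding enumerating-function value. Your direct appeal to Lemma~2 on the tame relations from Proposition~6 amounts to the same thing.

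For the reduction to SBC, your outline matches the paper's: trace the chain $\alpha^1,\beta^1 \to \gamma^1 \to \overline{\gamma}^1 \to \alpha^2,\beta^2$ through the explicit formulas in the proof of Proposition~6, together with the remarks after Lemma~3 for the characteristic functions.
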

\begin{proof}
As already indicated, we do not attempt to improve on Equation (6) for
the lower bound statement for $H$.  For $G$, however, we do note
that $\text{IStep}\left ( n,2^{n-1}\right ) = \lfloor (n+1)/2\rfloor > 1$. 
Thus, $2^{n-1}$ is the largest element of $\text{I}B^1_{n,1}$, and
so if $k = \left ( G_n\right )^{-1}\left ( 2^{n-1}\right )$, then $k$ is the largest
element of $\text{I}A^1_{n,1}$.  As noted at the end of (4.1),
possibly $k \neq n$ (if $n$ is a power of 2), but, if $k \neq n$,
then $k = n-1$ (since $(1,2)$ is the only pair of consecutive 2-powers).
Thus, our analogue of Equation (8) for $G$ is:
\begin{equation}
2^n = \text{max}(G(n,n), G(n,n-1)) + \text{max}(G(n,n), G(n,n-1)).
\end{equation}
Turning to the proof that $G$ and $H$ are P-TIME, we know, by Proposition 6, that
the case conditions are P-TIME decidable, and that in
the first case we have $G(n,k) = H(n,k) = k$.  For each of
the remaining cases (case 2, for $G$ and cases 2, 3, for $H$),
we exhibit P-TIME decidable relations involving $n, k, m$ whose unique solution,
$m$, is less than $2^n$, is P-TIME and is the value of
$G(n,k)$ (resp. $H(n,k)$) as determined by the case in question.  This is
done in Equations (14), (15), (16).  

For readability, we will use $i$ as an abbreviation
for $\text{IStep}(n,k)$ in each of these equations, and in Equation (15), we will
also use $j$ as an abbreviation for $\text{Weight}(k)$, but the relations expressed
by these equations really involve only $n,k,m$.
The P-TIME computability of the unique solutions depends on Proposition 6, and on Lemma 2
(for the P-TIME computability of the relevant enumerating function) and will
be established by exhibiting an equation that specifies the computation.

For case 2 of Equation (10), the P-TIME decidable relation is:
\begin{equation}
\beta^1(n,i,m)\chi_{\text{RI}B^1}(n,i,m) =
\alpha^1(n,i,k).
\end{equation}
\noindent
The P-TIME computability of the unique solution $m = G(n,k)$ of
Equation (14) is established by:
\begin{equation*} 
\text{if }\text{RI}A^1(n,i,k)\text{ holds, then }
G(n,k) = \text{ERI}B^1(n,i,\alpha^1(n,i,k)). 
\end{equation*}

For case 2 of Equation (12), the P-TIME decidable relation is:
\begin{equation}
\overline{\gamma}^1(n,j,i,m)\chi_{\text{RI}\overline{C}^1}(n,j,i,m) =
\overline{\gamma}^1(n,i,j,k).
\end{equation}
\noindent
The P-TIME computability of the unique solution $m = H(n,k)$ of
Equation (15) is established by:
\begin{equation*} 
\text{if }\text{RI}\overline{C}^1(n,i,j,k)\text{ holds, then }
H(n,k) = \text{ERI}\overline{C}^1(n,j,i,\overline{\gamma}^1(n,i,j,k)). 
\end{equation*}

Finally, for case 3 of Equation (12), the P-TIME decidable relation is:
\begin{equation}
\beta^2(n,i,m)\chi_{\text{RI}B^2}(n,i,m) =
\alpha^2(n,\text{IStep}(n,k),k).
\end{equation}
\noindent
Note also that
\begin{equation*} 
\text{if }\text{RI}A^2(n,i,k)\text{ holds, then }
H(n,k) = \text{ERI}B^2(n,i,\alpha^2(n,i,k)). 
\end{equation*}
This means that $H(n,k)$ is the unique solution of Equation (16), and thus is P-TIME.

The argument that each of $G, H$ is simply computed in terms of SBC is similar to that in Theorem 2, and
involves the straightforward verification that all of the functions involved in Equations (14), (15), (16)
are simply computed in terms of SBC.  This traces back to examining the proof of Proposition 6.  Each
of $\alpha^1,\ \beta^1$ is explicitly and simply computed in terms of $\alpha,\beta$ and SBC.  Next,
$\gamma^1$ is explicitly and simply computed in terms of $\beta^1$ and SBC.  Then, $\overline{\gamma}^1$
is explicitly and simply computed in terms of $\gamma^1$.  Next, $\alpha^2$ is simply and explicitly computed
in terms of $\alpha^1$ and $\overline{\gamma}^1$, while $\beta^2$ is simply and explicitly
computed in terms of $\beta^1,\ \overline{\gamma}^1$ and IStep.  Finally, $\chi_{\text{RI}B^1},\ 
\chi_{\text{RI}\overline{C}^1}$ and $\chi_{\text{RI}B^2}$ can then be computed simply in terms of SBC
using the second observation in the first paragraph following the proof of Lemma 3.
\end{proof}

\end{document}